\newtheorem{thm}{Theorem}[section]
\newtheorem{prop}[thm]{Proposition}
\newtheorem{cor}[thm]{Corollary}
\newtheorem{lem}[thm]{Lemma}
\theoremstyle{definition}
\newtheorem{rem}[thm]{Remark}
\newtheorem{defn}[thm]{Definition}
\newtheorem{ex}[thm]{Example}
\newtheorem{setting}[thm]{Setting}
\numberwithin{equation}{section}
\theoremstyle{plain}
\newtheorem{thm}{Theorem}[section]
\newtheorem{prop}[thm]{Proposition}
\newtheorem{lem}[thm]{Lemma}
\newtheorem{cor}[thm]{Corollary}
\newtheorem*{claim*}{Claim}
\theoremstyle{definition}
\newtheorem{defn}[thm]{Definition}
\newtheorem{ex}[thm]{Example}
\newtheorem{setting}[thm]{Setting}
\theoremstyle{remark}
\newtheorem{rem}[thm]{Remark}
\newtheorem*{ac}{Acknowledgments}
\numberwithin{equation}{thm}
\def\Im{\operatorname{Im}}
\def\Hom{\operatorname{Hom}}
\def\Proj{\operatorname{Proj}}
\def\Coker{\mathrm{Coker}}
\def\rank{\mathrm{rank}}
\def\m{\mathfrak m}
\def\H{\mathrm{H}}
\newcommand{\rmH}{\mathrm{H}}
\newcommand{\rmI}{\mathrm{I}}
\newcommand{\rmQ}{\mathrm{Q}}
\newcommand{\calF}{\mathcal{F}}
\newcommand{\calR}{\mathcal{R}}
\newcommand{\calS}{\mathcal{S}}
\newcommand{\fka}{\mathfrak{a}}
\newcommand{\fkM}{\mathfrak{M}}
\newcommand{\mapright}[1]{%
\smash{\mathop{%
\hbox to 1cm{\rightarrowfill}}\limits^{#1}}}
\newcommand{\mapleft}[1]{%
\smash{\mathop{%
\hbox to 1cm{\leftarrowfill}}\limits_{#1}}}
\def\depth{\operatorname{depth}}
\def\Supp{\operatorname{Supp}}
\def\Ass{\operatorname{Ass}}
\def\height{\mathrm{ht}}
\def\Spec{\operatorname{Spec}}
\def\Sym{\mathrm{Sym}}
\def\ord{\operatorname{ord}}
\def\gr{\mbox{\rm gr}}
\def\Fitt{\mathrm{Fitt}}
\def\M{{\mathcal M}}
\newcommand{\rmbr}{\operatorname{br}}
\begin{document}

\setlength{\baselineskip}{15pt}
\title[On Ratliff--Rush closure of modules]{On Ratliff--Rush closure of modules}
\author{Naoki Endo}
\address{Global Education Center \\
 Waseda University \\
  1-6-1 Nishi-Waseda, Shinjuku-ku \\
 Tokyo 169-8050 \\ 
 Japan}
\email{naoki.taniguchi@aoni.waseda.jp}
\urladdr{http://www.aoni.waseda.jp/naoki.taniguchi/}


\begin{abstract} 
In this paper, we introduce the notion of Ratliff--Rush closure of modules and explore whether the condition of the Ratliff--Rush closure coincides with the integral closure. The main result characterizes the condition in terms of the normality of the projective scheme of the Rees algebra. In conclusion, we shall give a criterion for the Buchsbaum Rees algebras. 
\end{abstract}


\maketitle


\section{Introduction}

This paper investigates the Ratliff--Rush closure and the Buchsbaum property for the Rees algebras of modules. For an arbitrary ideal $I$ in a commutative Noetherian ring $A$, we set 
$$
\widetilde{I} = \displaystyle\bigcup_{\ell \ge 0} \left[ I^{\ell +1} :_A I^{\ell}\right]
$$
and name it {\it the Ratliff--Rush closure of $I$}, which forms an ideal of $A$, containing $I$. 
In 1978, L. J. Ratliff and D. E. Rush investigated the ideal $\widetilde{I}$, and they proved that $(\widetilde{I})^n = I^n$ for every $n \gg 0$. In addition, if $J$ is an ideal of $A$ such that $J^n = I^n$ for every $n \gg 0$, then $J \subseteq \widetilde{I}$. Therefore, $\widetilde{I}$ is the largest ideal of $A$ satisfying $(\widetilde{I})^n = I^n$ for a sufficiently large integer $n \gg 0$, and hence $\widetilde{\widetilde{I}\hspace{0.2em}} = \widetilde{I}$. The products of the Ratliff--Rush closures are contained in the Ratliff--Rush closure of the products of ideals. Moreover, if $I$ possesses a positive grade, then $I$ is a reduction of its Ratliff--Rush closure $\widetilde{I}$; in other words, the integral closure $\overline{I}$ of $I$ contains $\widetilde{I}$. One can consult \cite{Mac, RR} for basic properties of Ratliff--Rush closure of ideals.

In 2005, S. Goto and N. Matsuoka focused on the difference between $\widetilde{I}$ and $\overline{I}$, and explored the question of when does the Ratliff--Rush closure coincide with the integral closure. Over a two-dimensional regular local ring $A$, they provided a characterization of the equality $\widetilde{I} = \overline{I}$ in terms of the condition that the Rees algebra
$$
\calR(I) = A[It] =\sum_{i\ge 0}I^it^i \subseteq A[t]
$$
of the ideal $I$ is locally normal on $\Spec\calR(I)\setminus\{\fkM\}$, where $t$ denotes an indeterminate over $A$ and $\fkM$ stands for the graded maximal ideal in $\calR(I)$. Additionally, they  showed that the latter condition is equivalent to its projective scheme 
$\Proj \calR(I) =\{ P \in \Spec \calR(I) \mid P \  \text{is a graded ideal}, ~P \nsupseteq \calR(I)_+\}$
being normal, i.e., the local ring $\calR(I)_P$ is normal for every point $P \in \Proj \calR(I)$, where $\calR(I)_+ = \sum_{i >0}I^it^i$. See \cite{GM, M} for the details.

The notion of Rees algebra $\calR(I)$ can be generalized to a finitely generated $R$-module $M$; developing the theory of Rees algebras of modules is significant to further study of the Rees algebras of ideals, which is one of the motivations for this generalization. Besides, the Rees algebra of $M$ includes the notion of multi-Rees algebra, which corresponds to the case where $M$ forms a direct sum of ideals. Moreover, T. Gaffney requires this generalization of Rees algebras for applications to equisingularity theory (e.g., \cite{Gaf92, Gaf96}). Geometrically, the projective scheme of $\calR(M)$ defines the blow-up of $A$ at the module $M$ as well as the case of ideals (see \cite{Rossi, Villamayor}). Hence, it is still worth considering the notion of Rees algebras of modules for not only commutative algebra but also algebraic geometry and theory of singularities.

In this paper, to further study of the Rees algebras of modules, we investigate the question of when the Ratliff--Rush closure coincides with the integral closure for the case of modules.

We now explain our results more precisely. Let $A$ be a Noetherian ring, $M$ a finitely generated $A$-module which is contained in a free module $F$ of finite rank $r>0$. We denote the symmetric algebras of $M$ and $F$ by $\Sym_A(M)$, $\Sym_A(F)$, respectively. Let $\operatorname{Sym}(i) : \Sym_A(M) \to \Sym_A(F)$ be the homomorphism induced by the embedding $i : M \hookrightarrow F$. The Rees algebra $\calR(M)$ of $M$ is defined by
$$
\calR(M) = \Im \left[ \Sym_A(M) \overset{\operatorname{Sym}(i)}{\longrightarrow} \Sym_A(F) \right]
$$
(see \cite{SUV}). Hence, $\calR (M) = \Sym_A(M)/T$, where $T=t(\Sym_A(M))$ denotes the torsion part of $\Sym_A(M)$ as an $A$-module. Let $M^n = [\calR (M)]_n$ stand for the homogeneous component of $\calR(M)$ of degree $n$. In particular, $M = [\calR (M)]_1$ is an $A$-submodule of $\calR(M)$. We set
$$
\widetilde{\calR(M)} = \varepsilon^{-1}(\rmH^0_\fka(S/\calR(M)))
$$
which forms a graded subring of the polynomial ring $S=\Sym_A(F)$, containing the Rees algebra $\calR(M)$, where $\varepsilon : S \to S/\calR(M)$ stands for the canonical surjection and $\rmH^0_\fka(-)$ denotes the $0$-th local cohomology functor with respect to $\fka = \calR(M)_+$. 

\begin{defn}\label{1.1}
For each integer $n \ge 0$, we define $\widetilde{M^n}$ to be the homogeneous component of $\widetilde{\calR(M)}$ of degree $n$ and call it {\it the Ratliff-Rush closure of $M^n$}, i.e.,
$$
\widetilde{M^n} = \bigcup_{\ell > 0} \left[(M^n)^{\ell+1}:_{F^n} (M^n)^{\ell}\right].
$$
In particular, $\widetilde{M\hspace{0.2em}} = \bigcup_{\ell > 0} \left[M^{\ell+1}:_F M^{\ell}\right].$
\end{defn}

\noindent
In the case where $A$ is a Noetherian domain, the notion of Ratliff--Rush closure $\widetilde{M\hspace{0.2em}}$ of $M$ has already defined by J.-C. Liu (\cite{L}) to be the largest $A$-submodule $N$ of $F$, which satisfies $M \subseteq N \subseteq F$ and $M^n = N^n$ for every $n \gg 0$. We shall prove in Proposition \ref{3.15} that these definitions coincide, and hence Definition \ref{1.1} generalizes the notion given by J.-C. Liu.


If $R$ is a Noetherian local ring with maximal ideal $\m$, then $\calR(M)$ possesses a unique graded maximal ideal $\fkM=\m \calR(M) + \fka$. Then, we say that $\calR(M)$ has {\it finite local cohomology} if the $i$-th graded local cohomology module $\rmH_\fkM^i(\calR(M))$ is finitely generated for every $i \ne \dim \calR(M)$.


With this notation, the main result of this paper is stated as follows, which is a complete generalization of the results in \cite{GM, M}.

\begin{thm}\label{1.2}
Let $(A, \m)$ be a two-dimensional regular local ring with infinite residue class field, $M \ne (0)$ a finitely generated torsion-free $A$-module. Then the following conditions are equivalent.
\begin{enumerate}
\item[$(1)$] $\widetilde{M\hspace{0.2em}}=\overline{M}$.
\item[$(2)$] $\widetilde{M^n}=\overline{M^n}$ for every $n>0$.
\item[$(3)$] $\overline{M^{\ell}} = M^{\ell}$ for some $\ell>0$.
\item[$(4)$] There exists an integer $\ell>0$ such that $\overline{M^n} =M^n$ for every $n \ge \ell$.
\item[$(5)$] $\Proj\calR(M)$ is a normal scheme.
\item[$(6)$] $\calR(M)_P$ is normal for every $P \in \Spec \calR(M)\setminus \{\fkM\}$.
\end{enumerate}
When this is the case, we have the following.
\begin{enumerate}
\item[$(a)$] $\calR(M)$ has finite local cohomology and $\H_{\fkM}^p(\calR(M)) = (0)$ for every $p \ne 1, r+2$. 
\item[$(b)$] $\left[\H_{\fkM}^1(\calR(M))\right]_n \cong \overline{M^n}/M^n$ as an $A$-module for every $n \in \Bbb Z$.
\item[$(c)$] $\calR(M)$ is a Cohen--Macaulay ring if and only if $M$ is integrally closed.
\end{enumerate}
\end{thm}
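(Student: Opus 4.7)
The plan is to follow the strategy of Goto--Matsuoka \cite{GM} adapted to modules via the embedding $\calR(M)\subseteq S=\Sym_A(F)$. Write $\overline{\calR(M)}$ for the integral closure of $\calR(M)$ inside $S$; its $n$-th graded piece is $\overline{M^n}$, and the quotient $N=\overline{\calR(M)}/\calR(M)$ is a finitely generated graded $\calR(M)$-module since $A$ is excellent. I would organise the argument as the cyclic chain $(2)\Rightarrow(1)\Rightarrow(6)\Leftrightarrow(5)\Rightarrow(4)\Rightarrow(3)\Rightarrow(2)$. The steps $(2)\Rightarrow(1)$ and $(4)\Rightarrow(3)$ are tautological, and $(5)\Leftrightarrow(6)$ is the usual homogenisation argument: every $P\in\Spec\calR(M)\setminus\{\fkM\}$ satisfies $P\not\supseteq\fka$, so the associated graded prime $P^{\ast}=\bigoplus_n[P]_n$ belongs to $\Proj\calR(M)$, and $\calR(M)_P$ is a further localisation of $\calR(M)_{P^{\ast}}$, which preserves normality.

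The implication $(5)\Rightarrow(4)$ is the central exploitation of the local-cohomology formula. Under $(5)$, $N_P=0$ for every homogeneous prime $P\not\supseteq\fka$, so $N=\rmH^0_\fka(N)$; being a finitely generated graded module killed by a power of $\fka$, its graded pieces vanish in high degree, yielding $(4)$. Conversely, for $(1)\Rightarrow(6)$ I would invoke the defining identity $\widetilde{\calR(M)}=\varepsilon^{-1}(\rmH^0_\fka(S/\calR(M)))$. The hypothesis $\widetilde{M\hspace{0.2em}}=\overline{M}$ says $[\overline{\calR(M)}/\widetilde{\calR(M)}]_1=0$; combined with $\dim A=2$ (so that the relevant depth-counting on the graded pieces of $S/\calR(M)$ propagates from degree $1$ upward), this forces $\overline{\calR(M)}/\widetilde{\calR(M)}$ to vanish entirely, hence $N$ is $\fka$-torsion and $\calR(M)_P=\overline{\calR(M)}_P$ for every $P\not\supseteq\fka$.

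The hardest step is $(3)\Rightarrow(2)$, and it rests on a Zariski-type product theorem for modules over a two-dimensional regular local ring: the product of two integrally closed, finitely generated torsion-free submodules of $F$ is again integrally closed, and consequently $(\overline{M})^n=\overline{M^n}$ for every $n$. Granting this, $M^{\ell}=\overline{M^{\ell}}$ immediately gives $M^{n\ell}=\overline{M^{n\ell}}$ for all $n\ge 1$, and the intermediate powers are recovered by interpolating via $\overline{M^k}\cdot\overline{M^{n\ell-k}}\subseteq\overline{M^{n\ell}}=M^{n\ell}$ together with the positive-grade of $M$, yielding $M^k=\overline{M^k}$ for all $k\ge\ell$; this is $(4)$. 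The last step $(4)\Rightarrow(2)$ is then immediate from Definition~\ref{1.1}: any $x\in\overline{M^n}$ satisfies $x\cdot(M^n)^k\subseteq\overline{M^{n(k+1)}}=M^{n(k+1)}=(M^n)^{k+1}$ once $n(k+1)$ exceeds the threshold given by $(4)$, so $x\in\widetilde{M^n}$.

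Finally, for the subsidiary assertions, the short exact sequence $0\to\calR(M)\to\overline{\calR(M)}\to N\to 0$ together with $N$ being $\fka$-torsion yields $\rmH^1_\fkM(\calR(M))\cong N=\bigoplus_n\overline{M^n}/M^n$, proving $(b)$. For $(a)$, one combines normality of $\calR(M)$ on the punctured spectrum with $\dim\calR(M)=r+2$ and the Cohen--Macaulayness of $\overline{\calR(M)}$ (which in this two-dimensional regular setting becomes a normal, finite extension of a polynomial ring) to deduce $\rmH^p_\fkM(\calR(M))=0$ for $p\neq 1,r+2$. Part $(c)$ follows immediately: $\calR(M)$ is Cohen--Macaulay iff $\rmH^1_\fkM(\calR(M))=0$, iff $M=\overline{M}$ by $(b)$. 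The decisive obstacle is the module version of Zariski's product theorem used for $(3)\Rightarrow(2)$, where the hypothesis that $A$ is a two-dimensional regular local ring with infinite residue field enters in an essential way.
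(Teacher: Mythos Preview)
Your overall approach matches the paper's: work with $N=\calR(\overline{M})/\calR(M)$ and exploit Kodiyalam's product formula $(\overline{M})^n=\overline{M^n}$. Two steps, however, do not go through as written. In $(5)\Leftrightarrow(6)$ you claim that every $P\in\Spec\calR(M)\setminus\{\fkM\}$ satisfies $P\not\supseteq\fka$; this is false, since $\calR(M)/\fka\cong A$ and the primes $\fka+\fkp\calR(M)$ with $\fkp\in\Spec A\setminus\{\m\}$ all contain $\fka$ but differ from $\fkM$. (These $P$ are in fact harmless because $\ell_A(F/M)<\infty$ forces $M_\fkp=F_\fkp$, making $\calR(M)_\fkp$ a polynomial ring over the regular ring $A_\fkp$; the paper simply bypasses the issue by proving $(5)\Rightarrow(4)\Rightarrow(6)$.) More seriously, your $(1)\Rightarrow(6)$ asserts that vanishing of $[\overline{\calR(M)}/\widetilde{\calR(M)}]_1$ ``propagates from degree $1$ upward'' via some depth principle; no such principle is available here, and this is not an argument. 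The paper instead proves $(1)\Rightarrow(4)$ directly: $(\widetilde{M})^n=M^n$ for $n\gg 0$ by the basic Ratliff--Rush property, so $M^n=(\widetilde{M})^n=(\overline{M})^n=\overline{M^n}$ by Kodiyalam. Your ``interpolation'' for $(3)\Rightarrow(4)$ has the same defect: from $\overline{M^k}\cdot M^{n\ell-k}\subseteq M^{n\ell}$ you cannot conclude $\overline{M^k}\subseteq M^k$, since the colon $M^{n\ell}:_{F^k}M^{n\ell-k}$ need not equal $M^k$. The clean fix is the elementary observation (the paper's Remark~\ref{shimoda}) that $M\subseteq\overline{M}$ and $M^\ell=(\overline{M})^\ell$ force $M^k=(\overline{M})^k=\overline{M^k}$ for all $k\ge\ell$.

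For $(a)$--$(c)$ you need $\calR(\overline{M})$ to be Cohen--Macaulay, and you cannot get this from normality alone, since $\dim\calR(\overline{M})=r+2\ge 3$. The paper invokes Katz--Kodiyalam: over a two-dimensional regular local ring with infinite residue field, $\overline{M}$ admits a parameter-module reduction $L$ with $(\overline{M})^2=L\cdot\overline{M}$, and reduction number one then makes $\calR(\overline{M})$ Cohen--Macaulay. Only with this in hand does the exact sequence $0\to\calR(M)\to\calR(\overline{M})\to N\to 0$ yield $\rmH^1_\fkM(\calR(M))\cong N$ and $\rmH^p_\fkM(\calR(M))=0$ for $p\ne 1,\,r+2$.
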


As a consequence, Theorem \ref{1.2} leads us to obtain a criterion for the Buchsbaum Rees algebras $\calR(M)$ with $\widetilde{M\hspace{0.2em}} = \overline{M}$ (see Theorem \ref{5.1}). Besides, we construct numerous examples of Buchsbaum Rees algebras. Furthermore, we shall give an example of the Buchsbaum Rees algebra of indecomposable module $M$, that is, $\calR(M)$ cannot be appeared as the multi-Rees algebra. Finally let us show the sufficient condition for the fiber cone $\calF(M) = A/\m \otimes_A \calR(M)$ of $M$ to be a Buchsbaum ring.

Let us now explain how this paper is organized. In Section 2, we provide an overview of the Rees algebras of modules, including the notion of integral closures. Section 3 defines the Ratliff--Rush closure of modules and provides some preliminary results. In Section 4, we provide a proof of Theorem \ref{1.2}, and in the last section, we explore the application of our theory.


\section{Preliminaries}

In this section, we summarize some basic properties of the Rees algebras and the integral dependence for modules. Let $A$ be a commutative Noetherian ring, $M$ a finitely generated $A$-module which is contained in a free $A$-module $F$ of positive rank $r > 0$. The embedding $M \subseteq F$ induces the graded $A$-algebra homomorphism between the symmetric algebras
$$
\Sym(i) : \Sym_A(M) \longrightarrow \Sym_A(F)
$$
of $M$ and $F$. As $F$ is the free $A$-module, the symmetric algebra $S = \Sym_A(F)$ of $F$ concides with the polynomial ring $S=A[t_1, t_2, \ldots, t_r]$ over $A$, where $r=\rank_A F >0$. 
In 2003, A. Simis, B. Ulrich, and W. V. Vasconcelos defined the Rees algebra $\calR(M)$ of the module $M$ as the image of the induced homomorphism;
\begin{eqnarray*}
\calR(M) &=& \Im( \Sym (i) )\hspace{0.5em} \subseteq \hspace{0.5em} S \hspace{0.4em} = \hspace{0.4em} A[t_1, t_2, \ldots, t_r]\\
         &=& \bigoplus_{n \ge 0}M^n
\end{eqnarray*}
where $M^n$ denotes the $n$-th homogeneous component of the graded ring $\calR(M)$. Hence, if we take $M$ to be an ideal $I$ and $F=R$, then the Rees algebra of $M$ is exactly the same as the usual Rees algebra $\calR(I)$ of the ideal.

Let us recall the definition of the integral closure of modules.

\begin{defn}\label{2.1}
For every integer $n \ge 0$, we define {\it the integral closure}
$$
\overline{M^n} = \left(\overline{\calR(M)}^{S}\right)_n \subseteq S_n=F^n
$$
of $M^n$ to be the $n$-th homogeneous component of the integral closure $\overline{\calR(M)}^{S}$ of $\calR(M)$ in $S$. In other words, $\overline{M^n}$ is the integral closure of the ideal $(MS)^n$ of degree $n$, i.e., 
$$
\overline{M^n} = \left(\overline{(MS)^n}\right)_n.
$$
In particular, $\overline{M} =\left(\overline{MS}\right)_1 \subseteq F$. 
Hence $\overline{M}$ consists of the element $x \in F$ which satisfies the integral equation $x^n + c_1x^{n-1} + \cdots + c_n = 0$ in $S$, where $n >0$ and $c_i \in M^i$ for every $1 \le  i \le n$. 
\end{defn}

Let us note the following, which might be known, but we include a brief proof for the sake of completeness. We denote by $\rmQ(R)$ the total ring of fractions of a ring $R$.

\begin{lem}\label{2.3}
Suppose that $\rank_AM=r$. Then one has 
$\rmQ(\calR(M))=\rmQ(S).$
Moreover, if we assume that $A$ is a normal domain, then 
$\overline{\calR(M)}^{\rmQ(\calR(M))} = \ \overline{\calR(M)}^S$.
\end{lem}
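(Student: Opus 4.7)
The plan is to exploit the rank hypothesis to produce a single nonzerodivisor $a \in A$ with $aF \subseteq M$, so that $\calR(M)$ and $S$ share a common localization and, in the normal case, the same integral closure in $\rmQ(S)$.

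First I would establish the key claim that there exists a nonzerodivisor $a \in A$ satisfying $aF \subseteq M$. The hypothesis $\rank_A M = r = \rank_A F$ forces the finitely generated $A$-module $F/M$ to be torsion; choosing a finite generating set of $F/M$, I pick for each generator a nonzerodivisor of $A$ annihilating it, and the product of these finitely many nonzerodivisors gives a single nonzerodivisor $a \in A$ with $aF \subseteq M$. Since the extension $A \to S = A[t_1, \ldots, t_r]$ is flat, $a$ remains a nonzerodivisor in $S$, hence also in the subring $\calR(M)$. From $a t_j \in M \subseteq [\calR(M)]_1$ for each basis element $t_j$ of $F$ I deduce $t_j = (a t_j)/a \in \calR(M)[1/a]$, so that $S[1/a] \subseteq \calR(M)[1/a]$; the reverse inclusion is automatic. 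Hence $\calR(M)[1/a] = S[1/a]$, and since inverting a nonzerodivisor does not alter the total ring of fractions,
$$
\rmQ(\calR(M)) = \rmQ(\calR(M)[1/a]) = \rmQ(S[1/a]) = \rmQ(S),
$$
which is the first assertion.

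For the moreover clause I assume $A$ is a normal domain. Then $S$ is a normal domain, being a polynomial ring over one. Any element of $\rmQ(S) = \rmQ(\calR(M))$ integral over $\calR(M)$ is a fortiori integral over $S$, and hence lies in $S$ by normality; combined with the obvious inclusion $\overline{\calR(M)}^S \subseteq \overline{\calR(M)}^{\rmQ(\calR(M))}$, this yields $\overline{\calR(M)}^{\rmQ(\calR(M))} = \overline{\calR(M)}^S$. The only mildly delicate step in the whole argument is the passage from pointwise torsion of $F/M$ to a single uniform annihilator, and this is handled by the finite generation of $F/M$ together with the fact that a finite product of nonzerodivisors of $A$ remains a nonzerodivisor.
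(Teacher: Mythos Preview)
Your proof is correct. Both your argument and the paper's rest on the same underlying observation---that the rank hypothesis makes $F/M$ a torsion $A$-module---but the executions diverge. The paper tensors everything with $\rmQ(A)$: from $\rmQ(A)\otimes_A M \cong \rmQ(A)\otimes_A F$ it gets $\rmQ(A)\otimes_A \Sym_A(M) \cong \rmQ(A)\otimes_A S$, and then uses the exact sequence $0 \to t(\Sym_A(M)) \to \Sym_A(M) \to \calR(M) \to 0$ to replace $\Sym_A(M)$ by $\calR(M)$ after localization. You instead extract a single nonzerodivisor $a \in A$ with $aF \subseteq M$ and show directly that $\calR(M)[1/a] = S[1/a]$, avoiding any mention of the symmetric algebra or its torsion. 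Your route is more concrete and arguably more elementary; the paper's is more functorial. For the moreover clause the two arguments coincide: both simply invoke normality of the polynomial ring $S$ over the normal domain $A$.
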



\begin{proof}
Look at the commutative diagram 
\[
\xymatrix{
\rmQ(A) \otimes_A \Sym_A(M) &\ar[r]^{\exists1} && \rmQ(A) \otimes_A S &  \\
\Sym_A(M) \ar[u] & \ar[r]^{\Sym(i)} & & S \ar[u]
}
\]
where the vertical maps are canonical homomorphisms of localizations. The isomorphism $\rmQ(A) \otimes_A M \cong \rmQ(A) \otimes_A F$ yields that
$\rmQ(A) \otimes_A \Sym_A(M) \cong \rmQ(A) \otimes_A S$
as an $A$-module. Since $S$ is free as an $A$-module, we get the exact sequence
$$
0 \to t(\Sym_A(M)) \to \Sym_A(M) \to \calR(M) \to 0
$$
of $A$-modules, where $t(\Sym_A(M))$ denotes the torsion part of $\Sym_A(M)$ as an $A$-module. Therefore, $\rmQ(A) \otimes_A S \cong \rmQ(A) \otimes_A \Sym_A(M) \cong \rmQ(A) \otimes_A \calR(M)$, so that $\rmQ(\calR(M))=\rmQ(S)$. The last assertion follows from the fact that $S =A[t_1, t_2, \ldots, t_r]$ is a normal domain. This completes the proof.
\end{proof}

Thanks to Lemma \ref{2.3}, we have the following, which claims that, up to isomorphism, the integral closure does not depend on the choice of the embedding of $M$. Remember that an $A$-module $M$ is called {\it an ideal module}, if $M \ne (0)$ is finitely generated, torsion-free, and the double dual $M^{**}$ of $M$ is free, where $(-)^* = \Hom_A(-, A)$. Typically, finite direct sums of ideals of grade at least two and non-zero finitely generated torsion-free modules over two-dimensional regular local rings are the ideal modules. The reader is referred to \cite[Section 5]{SUV} for basic properties of ideal modules.

\begin{prop}\label{2.4}
Suppose that $A$ is a normal domain and $M$ is an ideal module. If $M$ is embedded into the finite free module $G$ of positive rank, then 
$
\overline{\calR(M)}^{S} \ \cong \ \overline{\calR(M)}^{T}
$
where $T=\Sym_A(G)$.
\end{prop}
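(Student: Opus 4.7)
The plan is to exploit two structural observations. First, the Rees algebra is intrinsic to $M$: for any embedding of $M$ into a finite free $A$-module, the Rees algebra is canonically isomorphic to $\Sym_A(M)/t(\Sym_A(M))$. Second, under the normality hypothesis on $A$, the integral closure of this Rees algebra in any ambient polynomial ring $\Sym_A(G)$ coincides with its normalization, i.e., the integral closure inside its own total ring of fractions. These two facts together will immediately give the desired isomorphism.

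To establish the first fact, I would verify that for an embedding $j : M \hookrightarrow G$ with $\rank_A G = s$, the kernel of the induced homomorphism $\Sym(j) : \Sym_A(M) \to T$ coincides with the $A$-torsion part $t(\Sym_A(M))$. Since $T$ is torsion-free over the domain $A$, the kernel contains the torsion. Conversely, localizing at $\rmQ(A)$ turns $\Sym(j)$ into the natural inclusion $\Sym_{\rmQ(A)}(\rmQ(A)^r) \hookrightarrow \Sym_{\rmQ(A)}(\rmQ(A)^s)$ of polynomial rings, where $r = \rank_A M$; this inclusion is obtained by extending a basis of $\rmQ(A) \otimes_A M$ to one of $\rmQ(A) \otimes_A G$, and it is injective. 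Hence anything killed by $\Sym(j)$ dies after tensoring with $\rmQ(A)$ and is therefore $A$-torsion. Consequently $\calR(M)$ in $T$ is canonically isomorphic to $\Sym_A(M)/t(\Sym_A(M))$, and the same description applies with $F$ in place of $G$, yielding a canonical graded $A$-algebra isomorphism between the copies of $\calR(M)$ sitting inside $S$ and inside $T$.

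For the second fact, choose $\rmQ(A)$-linearly independent linear forms $\ell_1,\ldots,\ell_r$ in $\sum_i \rmQ(A) t_i$ so that $\rmQ(A) \otimes_A \calR(M) = \rmQ(A)[\ell_1,\ldots,\ell_r]$ inside $\rmQ(A) \otimes_A T$, and extend them to a $\rmQ(A)$-basis $\ell_1,\ldots,\ell_s$ of $\sum_i \rmQ(A) t_i$. Then $\rmQ(T) = \rmQ(A)(\ell_1,\ldots,\ell_s)$ is a purely transcendental extension of $\rmQ(\calR(M)) = \rmQ(A)(\ell_1,\ldots,\ell_r)$, so the latter is algebraically closed in the former. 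Since $T$ is normal (being a polynomial ring over the normal domain $A$), any element of $T$ integral over $\calR(M)$ is algebraic over $\rmQ(\calR(M))$ and hence already lies in $\rmQ(\calR(M)) \cap T$; conversely, any element of $\rmQ(\calR(M))$ integral over $\calR(M)$ is integral over $T$ and therefore lies in $T$ by normality. Thus $\overline{\calR(M)}^T = \overline{\calR(M)}^{\rmQ(\calR(M))}$ is the normalization of $\calR(M)$. The identical identification applies to $\calR(M) \subseteq S$, so both integral closures are realizations of the normalization of the abstract ring $\Sym_A(M)/t(\Sym_A(M))$, whence they are isomorphic.

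The main obstacle I expect is the second step when $s > r$: one must argue carefully that enlarging the ambient polynomial ring beyond the transcendence degree of $\calR(M)$ does not produce genuinely new integral elements, and this is precisely what the purely transcendental extension $\rmQ(T)/\rmQ(\calR(M))$ controls. The ideal module hypothesis then enters essentially only to provide the well-defined rank and torsion-freeness of $M$ that legitimize the generic rank computations in the first step.
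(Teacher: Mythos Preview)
Your argument is correct and shares the same core insight as the paper's---that a polynomial (equivalently, purely transcendental) extension contributes no new integral elements---but the two proofs are organized quite differently. The paper exploits the ideal module hypothesis in a concrete way: since $M^{**}$ is free, one takes $F=M^{**}$ with its canonical embedding $\varphi:M\hookrightarrow F$, and then the given embedding $\psi:M\hookrightarrow G$ factors through an injection $\xi:F\hookrightarrow G$ obtained from $\psi^{**}$. Passing to symmetric algebras yields an honest chain $\calR(M)\subseteq S\subseteq T$, and the problem reduces to showing that $S$ is integrally closed in $T$; this follows because $\rmQ(A)\otimes_A T$ is a polynomial ring over $\rmQ(A)\otimes_A S$, so a degree argument forces an integral element down into $\rmQ(A)\otimes_A S$, hence into $S$ by normality. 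Your route bypasses the construction of $\xi$ altogether by identifying both closures with the abstract normalization $\overline{\calR(M)}^{\rmQ(\calR(M))}$, which is cleaner and, as you observe, uses the ideal module hypothesis only through torsion-freeness and well-defined rank. The paper's more constructive route has the minor payoff that, once $S\subseteq T$ is fixed, one obtains an actual equality rather than an isomorphism, and this chain is reused verbatim in the companion proposition on Ratliff--Rush closures.
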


\begin{proof}
Note that $F=M^{**}$ is a finitely generated free $A$-module and we get a canonical embedding $0 \rightarrow M \overset{\varphi}{\rightarrow} F$ of $A$-modules.  We set $\xi={\eta_G}^{-1}\circ\psi^{**}$, where $\psi:M \to G$ denotes another embedding of $M$ and $\eta_G : G \to G^{**}$ stands for the biduality map. We then have the commutative diagram
\[
\xymatrix{
0 \ar[r] & M \ar[r]^{\varphi} \ar[d]^{{\rm id}_M} & F \ar[d]^{\xi}  \\
0  \ar[r] & M   \ar[r]^{\psi}  & G 
}
\]
of $A$-modules. Passing to the localization $\rmQ(A) \otimes_A (-)$, we obtain the injectivity of $\psi^{**}$, so is $\xi$. Hence we get the homomorphism
$$
\Sym(\xi) : S= \Sym_A(F) \longrightarrow T=\Sym_A(G)
$$
which is induced by $\xi : F \to G$. The splitting exact sequence
$$
0 \longrightarrow \rmQ(A) \otimes_A F \longrightarrow \rmQ(A) \otimes_A G \longrightarrow \rmQ(A) \otimes_A X \longrightarrow 0
$$
implies that $\rmQ(A) \otimes S(\xi)$ is a split monomorphism, where $X = \Coker~ \xi$. Therefore $\Sym(\xi)$ is injective. Passing to the map $\Sym(\xi)$, let us consider $\calR(M) \subseteq S \subseteq T$. To prove the equality of the integral closures of $\calR(M)$, it is enough to show that $S$ is integrally closed in $T$. We take $x \in T$, satisfying an integral equation
$$
x^n + c_1 x^{n-1} + \cdots + c_n = 0
$$
where $n>0$ and $c_i \in S$ for every $1 \le i \le n$. Then $1\otimes x\in \rmQ(A)\otimes_A T$, $1\otimes c_i\in \rmQ(A)\otimes_A S$, and
$$\hspace{-1em}
(*) \quad \quad (1 \otimes x)^n + (1 \otimes c_1) (1 \otimes x)^{n-1} + \cdots + (1 \otimes c_n) = 0 \quad \text{in} \quad \rmQ(A)\otimes_A T.
$$
Since $F \subseteq G$, note that $\rmQ(A)\otimes_A T$ is the polynomial ring over $\rmQ(A)\otimes_A S$ with $q \ge 0$ variables. To see the degree of the integral equation $(*)$, $1 \otimes x$ is a constant in $\rmQ(A)\otimes_A T$, so that $1 \otimes x \in \rmQ(A)\otimes_A S$. Because $S$ is a normal domain, $S$ is integrally closed in $T$, as desired.
\end{proof}


\section{Ratliff--Rush closure of modules}

The aim of this section is to introduce the notion of Ratliff--Rush closure of modules and to give some basic properties. Firstly let us fix the notation. Let $A$ be a Noetherian ring, $M$ a finitely generated $A$-module which is contained in a finite free module $F$ of rank $r>0$.
Let us denote by $\fka = \calR(M)_+ = \bigoplus_{n>0}M^n$ the positive part of $\calR(M)$. We define
$$
\widetilde{\calR(M)}^{S} = \varepsilon^{-1}\left(\rmH^0_{\fka}(S/\calR(M))\right) \subseteq S
$$
which forms a graded subring of $S$, containing $\calR(M)$, where $S=\Sym_A(F)$ is the symmetric algebra of $F$, $\rmH^0_{\fka}(-)$ denotes the $0$-th local cohomology functor with respect to the ideal $\fka$, and $\varepsilon : S \rightarrow S/\calR(M)$ stands for the canonical surjection. 


\begin{defn}\label{3.1}
For every integer $n \ge 0$, we define the {\it the Ratliff--Rush closure} 
$$
\widetilde{M^n\hspace{0.1em}} = \left(\widetilde{\calR(M)}^{S}\right)_n \subseteq S_n=F^n
$$
of $M^n$ to be the $n$-th homogeneous component of $\widetilde{\calR(M)}^{S}$.
\end{defn}

In the present paper, we adapt the above definition of Ratliff--Rush closures. However, the notion has already defined by J.--C. Liu in 1998 in the case where $A$ is a Noetherian domain ({\cite[Definition 2.2]{L}}). She defined the Ratliff--Rush closure $\widetilde{M\hspace{0.2em}}$ of $M$ to be the largest $A$-submodule $N$ of $F$ which satisfies the following two conditions;
\begin{itemize}
\item[$(1)$] $M \subseteq N \subseteq F$, 
\item[$(2)$] $M^n = N^n$ for every $n \gg 0$.
\end{itemize}
Note that these definitions coincide, when $A$ is a Noetherian domain (see Proposition \ref{3.15}).

The following ensures that Definition \ref{3.1} is a natural generalization of the ordinary Ratliff--Rush closure of ideals. The proof immediately comes from the definition.

\begin{prop}\label{3.4}
For every non-negative integer $n\ge 0$, we have
$$
\widetilde{M^n\hspace{0.1em}} = \bigcup_{\ell>0}\left[(M^n)^{\ell + 1}:_{F^n}(M^n)^{\ell}\right] = \left(\widetilde{(MS)^n}\right)_n.
$$
In particular
$$
\widetilde{M \hspace{0.2em}} = \bigcup_{\ell>0}\left[M^{\ell + 1}:_{F}M^{\ell}\right] = \left(\widetilde{MS\hspace{0.1em}}\right)_1.
$$
\end{prop}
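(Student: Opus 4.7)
The plan is to translate the local-cohomology definition of $\widetilde{M^n}$ into an elementary colon description, and then identify the result with the degree-$n$ component of the usual Ratliff--Rush closure of the ideal $(MS)^n \subseteq S$. Unfolding the definition, $x \in \widetilde{M^n}$ if and only if $x \in F^n$ and $\varepsilon(x) \in \rmH^0_\fka(S/\calR(M))$, that is, $\fka^{\ell} x \subseteq \calR(M)$ for some $\ell > 0$. The first step I would carry out is to describe the graded components of $\fka^{\ell}$: since $\fka = \bigoplus_{k>0} M^k$ and $M^a \cdot M^b = M^{a+b}$ inside $\calR(M)$, a one-line verification (using the factorization $k = 1 + \cdots + 1 + (k-\ell+1)$) gives
$$
[\fka^{\ell}]_k = M^k \quad \text{for every } k \ge \ell.
$$
Consequently the condition $\fka^{\ell} x \subseteq \calR(M)$ is equivalent to $M^k x \subseteq M^{k+n}$ for every $k \ge \ell$.

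Next I would reduce this ``eventual'' condition to the single-exponent form by exploiting the identity $(M^n)^{\ell} = M^{n\ell}$, which again follows from $M^a\cdot M^b = M^{a+b}$. The forward implication specializes $k = n\ell$. For the reverse, assume $M^{n\ell} x \subseteq M^{n(\ell+1)}$ and multiply by $M^j$ with $j \ge 0$:
$$
M^{n\ell + j} x \;=\; M^j \cdot (M^{n\ell} x) \;\subseteq\; M^j \cdot M^{n(\ell+1)} \;=\; M^{(n\ell+j)+n},
$$
so $M^k x \subseteq M^{k+n}$ holds for every $k \ge n\ell$. Combined with the previous paragraph, this yields the first equality
$$
\widetilde{M^n} \;=\; \bigcup_{\ell>0}\bigl[(M^n)^{\ell+1} :_{F^n} (M^n)^{\ell}\bigr].
$$

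For the second equality, I would invoke the definition of Ratliff--Rush closure for ordinary ideals, applied to the graded ideal $J = (MS)^n \subseteq S$, together with the fact that colons of graded ideals are graded. Since $(MS)^{nk} = M^{nk} \cdot S$, its degree-$d$ component is $M^{nk}\cdot F^{d-nk}$ for $d \ge nk$. In particular, $(MS)^{n\ell}$ is generated by its degree-$n\ell$ part $M^{n\ell}$, and the degree-$n(\ell+1)$ part of $(MS)^{n(\ell+1)}$ equals $M^{n(\ell+1)}$. Extracting the degree-$n$ component of the colon therefore gives
$$
\bigl[(MS)^{n(\ell+1)} :_S (MS)^{n\ell}\bigr]_n \;=\; \bigl\{ x \in F^n \mid x \cdot M^{n\ell} \subseteq M^{n(\ell+1)} \bigr\} \;=\; \bigl[(M^n)^{\ell+1} :_{F^n} (M^n)^{\ell}\bigr],
$$
and taking the union over $\ell > 0$ yields $(\widetilde{(MS)^n})_n = \widetilde{M^n}$.

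The main obstacle is purely bookkeeping: the three expressions live in different ambient objects ($\calR(M)$, $F^n$, and $S$), so care is needed to keep track of graded components and to verify that products computed in $\calR(M)$ agree with those in $S$. No conceptual difficulty arises beyond the identity $[\fka^{\ell}]_k = M^k$ for $k \ge \ell$; everything else is a direct manipulation of colon submodules in a graded ring.
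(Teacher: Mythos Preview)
Your argument is correct and is precisely the routine verification that the paper has in mind: the paper's own proof consists of the single line ``The proof immediately comes from the definition,'' and your unfolding of $\rmH^0_\fka(S/\calR(M))$ via $[\fka^{\ell}]_k = M^k$ for $k \ge \ell$, followed by the reduction to a single exponent and the degree-$n$ comparison with $\widetilde{(MS)^n}$, is exactly how one carries out that immediate check.
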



By Proposition \ref{3.4}, we reduce to the case of ideals, and therefore we get the following.
Notice that $M$ is faithful as an $A$-module if and only if the ideal $MS$ of $S$ contains a non-zero divisor on $S$.

\begin{cor}\label{3.5}
Suppose that $M$ is a faithful $A$-module. Then
$\widetilde{M^n\hspace{0.2em}} \subseteq \overline{M^n} \subseteq F^n$
for every non-negative integer $n \ge 0$. 
Hence, $\calR(M) \subseteq \widetilde{\calR(M)}^{S} \subseteq \overline{\calR(M)}^S \subseteq S$.
\end{cor}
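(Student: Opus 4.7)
The plan is to reduce to the classical Ratliff--Rush statement for ideals, applied to the single ideal $MS$ in the polynomial ring $S = A[t_1, \ldots, t_r]$, via Proposition \ref{3.4}. Indeed, by that proposition $\widetilde{M^n\hspace{0.1em}}$ is the $n$-th homogeneous component of $\widetilde{(MS)^n\hspace{0.1em}}$, and by Definition \ref{2.1} $\overline{M^n}$ is the $n$-th homogeneous component of $\overline{(MS)^n}$. Hence, once the ideal-level inclusion $\widetilde{J\hspace{0.2em}} \subseteq \overline{J}$ is established for $J = (MS)^n$, the desired containment follows by taking the degree-$n$ component.

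First, I would invoke the equivalence noted in the paragraph preceding the corollary: $M$ is faithful over $A$ if and only if $MS$ contains a non-zero divisor on $S$. From this, every power $(MS)^n$ also contains a non-zero divisor of $S$, and thus has positive grade. Since the inclusion $\overline{M^n} \subseteq F^n$ is by definition, the only genuine content is the Ratliff--Rush inclusion $\widetilde{J\hspace{0.2em}} \subseteq \overline{J}$ for an ideal $J$ of positive grade in a Noetherian ring.

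Next, I would quote (or briefly recall) the classical fact (cf.\ \cite{Mac, RR}, and also recorded in the introduction of this paper): for an ideal $J$ of a Noetherian ring $R$ containing a regular element, $\widetilde{J\hspace{0.2em}} \subseteq \overline{J}$. The underlying argument is the standard determinantal trick: given $x \in \widetilde{J\hspace{0.2em}}$, choose $\ell > 0$ with $xJ^{\ell} \subseteq J\cdot J^{\ell}$; since $J^{\ell}$ is finitely generated and faithful (as it contains a power of a non-zero divisor), multiplication by $x$ on $J^{\ell}$ satisfies a monic polynomial equation with coefficients in $J$, exhibiting $x$ as integral over $J$.

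Putting these pieces together yields $\widetilde{(MS)^n\hspace{0.1em}} \subseteq \overline{(MS)^n}$ in $S$, and passing to the degree-$n$ homogeneous component gives $\widetilde{M^n\hspace{0.1em}} \subseteq \overline{M^n} \subseteq F^n$. The final chain $\calR(M) \subseteq \widetilde{\calR(M)}^{S} \subseteq \overline{\calR(M)}^{S} \subseteq S$ is then obtained by summing over $n$; the leftmost inclusion is immediate from the description $\widetilde{\calR(M)}^{S} = \varepsilon^{-1}(\rmH^0_{\fka}(S/\calR(M)))$, and the middle inclusion is exactly what we have just proved. There is no substantial obstacle: the only non-trivial input is the classical Ratliff--Rush result for ideals, and the machinery of Section 3 has been set up precisely so that this reduction is formal.
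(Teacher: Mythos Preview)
Your proposal is correct and follows essentially the same approach as the paper: reduce to the ideal $MS$ in $S$ via Proposition \ref{3.4}, use faithfulness of $M$ to ensure $MS$ contains a non-zero divisor, and then invoke the classical Ratliff--Rush inclusion $\widetilde{(MS)^n}\subseteq \overline{(MS)^n}$ from \cite{RR}. The only difference is that you spell out the determinantal-trick justification where the paper simply cites \cite[Section 1]{RR}.
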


\begin{proof}
Since $MS$ contains a non-zero divisor on $S$, $\widetilde{(MS)^n}\subseteq \overline{(MS)^n}$ by \cite[Section 1]{RR}. Hence the result comes from Proposition \ref{3.4}. 
\end{proof}

Similarly for the case of integral closures, we have the following.

\begin{prop}\label{3.6}
Suppose that $A$ is a normal domain and $M$ is an ideal module. If $M$ is embedded into the finite free module $G$ of positive rank, then 
$
\widetilde{\calR(M)}^{S} \ \cong \ \widetilde{\calR(M)}^{T}
$
where $T=\Sym_A(G)$.
\end{prop}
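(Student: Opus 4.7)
The plan is to replicate the strategy used for Proposition \ref{2.4}. Since $M$ is an ideal module, $F = M^{**}$ is a finite free $A$-module, and the biduality argument in the proof of Proposition \ref{2.4} produces an injection $\xi : F \hookrightarrow G$ compatible with the embeddings $\varphi : M \hookrightarrow F$ and $\psi : M \hookrightarrow G$. This induces an injective graded $A$-algebra homomorphism $\Sym(\xi) : S \hookrightarrow T$, through which we view $\calR(M) \subseteq S \subseteq T$. The decisive intermediate fact, already established inside the proof of Proposition \ref{2.4}, is that $S$ is integrally closed in $T$; in particular $\overline{\calR(M)}^{T} \subseteq S$.

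The main step will be to prove the sandwich
$$
\calR(M) \ \subseteq \ \widetilde{\calR(M)}^{T} \ \subseteq \ S.
$$
Since $A$ is a domain and $M$ is a non-zero torsion-free $A$-module, every non-zero element of $M$ has zero annihilator, so $M$ is faithful as an $A$-module. Corollary \ref{3.5}, applied now to the embedding $M \hookrightarrow G$, yields $\widetilde{\calR(M)}^{T} \subseteq \overline{\calR(M)}^{T}$, which combined with $\overline{\calR(M)}^{T} \subseteq S$ from the previous paragraph gives the desired sandwich.

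To conclude, I would observe that for an element $x \in S$ the membership condition ``$\fka^{\ell} \cdot x \subseteq \calR(M)$ for some $\ell > 0$'' defining the Ratliff--Rush closure is intrinsic to the pair $(\calR(M), \fka)$ and does not depend on whether $x$ is regarded as an element of $S$ or of $T$. Consequently $\widetilde{\calR(M)}^{S} = S \cap \widetilde{\calR(M)}^{T}$, and the inclusion $\widetilde{\calR(M)}^{T} \subseteq S$ established above promotes this to the equality $\widetilde{\calR(M)}^{S} = \widetilde{\calR(M)}^{T}$ inside $T$, giving the claimed isomorphism of graded $A$-subalgebras. I do not anticipate a serious obstacle; the only delicate point is the verification of faithfulness needed in order to invoke Corollary \ref{3.5} over $T$ rather than $S$, and this is automatic from the ideal module hypothesis.
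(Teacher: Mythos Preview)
Your proposal is correct and follows essentially the same route as the paper: reduce via the biduality map to the chain $\calR(M)\subseteq S\subseteq T$, use Corollary~\ref{3.5} together with the fact (from the proof of Proposition~\ref{2.4}) that $\overline{\calR(M)}^{T}\subseteq S$ to force $\widetilde{\calR(M)}^{T}\subseteq S$, and then observe that the defining condition $\fka^{\ell}x\subseteq\calR(M)$ reads the same in $S$ as in $T$. The paper's write-up is terser (it invokes Proposition~\ref{2.4} directly to get $\overline{\calR(M)}^{T}=\overline{\calR(M)}^{S}$ and leaves the easy inclusion $\widetilde{\calR(M)}^{S}\subseteq\widetilde{\calR(M)}^{T}$ implicit), but the argument is the same.
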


\begin{proof}
Let us maintain the notation as in the proof of Proposition \ref{2.4}. By the proof of Proposition \ref{2.4}, we may assume that
$\calR(M) \subseteq S \subseteq T$. Then, thanks to Proposition \ref{2.4} and Corollary \ref{3.5}, we get
$
\widetilde{\calR(M)}^{T} \subseteq \overline{\calR(M)}^{T} = \overline{\calR(M)}^{S} \subseteq S
$
which implies that $\widetilde{\calR(M)}^{S} \supseteq \widetilde{\calR(M)}^{T}$ as desired.
\end{proof}

Therefore, up to isomorphism, the Ratliff--Rush closure does not depend on the choice of the embedding of $M$.

\medskip

Let us now explore some examples. To do this, we first recall the notion of parameter module. Notice that, if $M$ is an ideal of $A$, then the parameter module in $A$ is exactly the same as the usual parameter ideal. We denote by $\mu_A(-)$ (resp. $\ell_A(-)$) the number of elements in a minimal system of generators (resp. the length as an $A$-module).


\begin{defn}[\cite{BR, HH}]\label{3.7}
Suppose that $(A, \m)$ is a Noetherian local ring with $d=\dim A$. Then $M$ is called {\it a parameter module in $F$}, if $\ell_A(F/M) < \infty$, $M \subseteq \m F$, and $\mu_A(M) = d + r -1$.
\end{defn}

\begin{rem}
Let $(A, \m)$ be a Noetherian local ring with $d = \dim A > 0$, $N$ a parameter module in $F$. If $N$ is integrally closed, then $r \le d-1$. In particular, if $d=2$, then $F \cong A$.
\end{rem}

\begin{proof}
Note that $\mu_A(N) = d+r-1 \ge r$. Thanks to \cite[Propositions 2.5, 4.3]{K}, we have $\mu_A(N) = \ord_A(\operatorname{Fitt}_A(F/N)) + r \ge 2r$, because $N \subseteq \m F$. Hence $r \le d-1$, as desired.
\end{proof}

We say that the module $M$ is {\it Ratliff--Rush closed}, if $\widetilde{M \hspace{0.2em}} = M$.

\begin{prop}\label{3.8}
Suppose that $(A, \m)$ is a Cohen--Macaulay local ring with $d = \dim A>0$. If $M$ is a parameter module in $F$, then 
$\widetilde{\calR(M)}^{S} = \calR(M)$. In particular, $M$ is Ratliff--Rush closed.
\end{prop}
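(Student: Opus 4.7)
My plan is to prove the equivalent statement $\rmH^0_\fka(S/\calR(M))=0$, where $\fka=\calR(M)_+$. By the defining formula $\widetilde{\calR(M)}^S = \varepsilon^{-1}(\rmH^0_\fka(S/\calR(M)))$, this immediately yields $\widetilde{\calR(M)}^S=\calR(M)$, and in particular $\widetilde{M\hspace{0.2em}}=M$ as the degree-one component. The central structural input I would invoke is that the Rees algebra of a parameter module over a Cohen--Macaulay local ring is itself Cohen--Macaulay of dimension $d+r$. This is classical for parameter ideals ($r=1$), and extends to parameter modules via the Eagon--Northcott resolution of $F/M$ (afforded by $\mu_A(M)=d+r-1$ and the finite length of $F/M$) combined with approximation complex techniques. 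Granting this, the identification $\calR(M)/\fka\cong A$ gives $\height_{\calR(M)}\fka = r$, and Cohen--Macaulayness promotes this to $\grade\fka=r$, so $\rmH^i_\fka(\calR(M))=0$ for every $i<r$.

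Next, I would show $\rmH^0_\fka(S)=0$. Since $\ell_A(F/M)<\infty$ and $\Ass A=\Min A$ consists of non-maximal primes, for any $\fkp\in\Ass A$ we have $M_{\fkp}=F_{\fkp}$, forcing $M\not\subseteq \fkp F$ by Nakayama's lemma applied at $\fkp$. Prime avoidance then produces $m\in M$ whose corresponding linear form in $S=A[t_1,\dots,t_r]$ is a nonzerodivisor on $S$, whence $\rmH^0_\fka(S)=0$. For $r\geq 2$, the long exact sequence of local cohomology applied to $0\to\calR(M)\to S\to S/\calR(M)\to 0$ gives
\[
0=\rmH^0_\fka(S)\to\rmH^0_\fka(S/\calR(M))\to\rmH^1_\fka(\calR(M))=0,
\]
so $\rmH^0_\fka(S/\calR(M))=0$, as required.

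For $r=1$, $M$ reduces to a parameter ideal $I$ in the Cohen--Macaulay local ring $A$, and the equality $\widetilde{I^n}=I^n$ for all $n\geq 1$ is the classical theorem of Ratliff and Rush, provable via the polynomial structure $\gr_I(A)\cong (A/I)[X_1,\dots,X_d]$ in which the variables form a regular sequence; one then concludes directly that $I^{n+\ell}:_A I^\ell = I^n$ for all $n,\ell\geq 0$. The principal obstacle I anticipate is establishing the Cohen--Macaulayness of $\calR(M)$ for general parameter modules (which does not appear as a prior result in the excerpt); once this is in hand, everything else is routine local-cohomology bookkeeping.
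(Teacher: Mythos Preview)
Your proposal is correct and follows essentially the same route as the paper: split off the case $r=1$ via the polynomial structure of $\gr_Q(A)$, and for $r\ge 2$ use the Cohen--Macaulayness of $\calR(M)$ together with $\height_{\calR(M)}\fka=r$ to kill $\rmH^0_\fka$ and $\rmH^1_\fka$ of $\calR(M)$, then read off $\rmH^0_\fka(S/\calR(M))=0$ from the long exact sequence. You are in fact slightly more careful than the paper, which passes directly from $\rmH^i_\fka(\calR(M))=0$ for $i=0,1$ to $\rmH^0_\fka(S/\calR(M))=0$ without mentioning the needed input $\rmH^0_\fka(S)=0$; your argument via $\ell_A(F/M)<\infty$ and prime avoidance supplies exactly that missing line. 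The Cohen--Macaulayness of $\calR(M)$ for a parameter module is indeed used without proof in the paper as well, so your identification of it as the one nontrivial external input is accurate.
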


\begin{proof}
First, we consider the case where $r = 1$. Then $M$ forms the parameter ideal $Q$ of $A$. Notice that the associated graded ring $G=\gr_Q(A) \cong (A/Q)[X_1, X_2, \ldots, X_d]$ of $Q$ is a Cohen--Macaulay ring with $\dim G = d >0$. Hence $Q^n$ is Ratliff--Rush closed for every $n \ge 0$. We may assume that $r \ge 2$. Remember that $r = \height_{\calR(M)} \fka$ (see \cite[Proposition 2.2]{SUV}). Since $\calR(M)$ is a Cohen--Macaulay ring, we have $\rmH^i_{\fka}(\calR(M)) = (0)$ for $i=0, 1$, whence $\rmH^0_{\fka}(S/\calR(M)) = (0)$. Consequently, $\widetilde{\calR(M)}^{S} = \calR(M)$.
\end{proof}

We note some examples.

\begin{ex}
Let $A=k[[X, Y]]$ be the formal power series ring over a field $k$. We set
$$
M = \left< 
\begin{pmatrix}
X \\
0
\end{pmatrix}, 
\begin{pmatrix}
Y \\
X
\end{pmatrix}, 
\begin{pmatrix}
0 \\
Y
\end{pmatrix}
\right> \subseteq F = A \oplus A.
$$
Then $M$ is a parameter module in $F$ and hence $M$ is Ratliff--Rush closed.   
\end{ex}

There is an example of the parameter module which is Ratliff--Rush closed, even if $A$ is not a Cohen--Macaulay ring.

\begin{ex}
Let $R=k[[X, Y, Z, W]]$ be the formal power series ring over a field $k$. We set $A = R/(X, Y) \cap (Z, W)$ which is a two-dimensional Buchsbaum local ring with $\depth A=1$. Then the parameter ideal $Q = (X-Z, Y-W)A$ is Ratliff--Rush closed. 
\end{ex}

The following property is useful for finding the Ratliff--Rush closure. 
An $A$-submodule $L$ of $M$ is called {\it a reduction of $M$}, if $M^{r+1} = L M^r$ for some integer $r \ge 0$, which is equivalent to saying that $M \subseteq \overline{L}$. 

\begin{prop}\label{3.12}
Let $L = Ax_1 + Ax_2 + \cdots + Ax_{\ell} \subseteq M$ be an $A$-submodule of $M$ such that $L$ is a reduction of $M$. Then
$$
\widetilde{M \hspace{0.2em}} = \bigcup_{n>0}\left[M^{n+1}:_F (A{x_1}^n + A{x_2}^n + \cdots + A{x_\ell}^n)\right].
$$
\end{prop}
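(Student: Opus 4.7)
My plan is to establish the two inclusions separately, using Proposition \ref{3.4} to rewrite $\widetilde{M\hspace{0.2em}}$ as $\bigcup_{n>0}[M^{n+1}:_F M^n]$. For brevity, set $L^{(n)}:=Ax_1^n+\cdots+Ax_\ell^n$. The inclusion $\widetilde{M\hspace{0.2em}} \subseteq \bigcup_{n>0}[M^{n+1}:_F L^{(n)}]$ will be immediate: since $L^{(n)}\subseteq L^n\subseteq M^n$, we have $[M^{n+1}:_F M^n]\subseteq [M^{n+1}:_F L^{(n)}]$, and taking the union over $n>0$ gives one direction at once.

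For the substantive inclusion, I would fix $y\in F$ with $y\cdot x_i^n\in M^{n+1}$ for every $i=1,\ldots,\ell$ and some $n>0$, and aim to produce $k>0$ with $y\cdot M^k\subseteq M^{k+1}$. The key combinatorial step is a pigeonhole argument: whenever $k\geq \ell(n-1)+1$, any monomial $x_1^{e_1}\cdots x_\ell^{e_\ell}$ of total degree $k$ must have at least one exponent $e_j\geq n$, and then inside $S=\Sym_A(F)$ one gets
\begin{align*}
y\cdot x_1^{e_1}\cdots x_\ell^{e_\ell} &= (y\cdot x_j^n)\cdot x_1^{e_1}\cdots x_j^{e_j-n}\cdots x_\ell^{e_\ell} \\
&\in M^{n+1}\cdot M^{k-n}\subseteq M^{k+1}.
\end{align*}
Since $L^k$ is generated as an $A$-module by such monomials, this yields $y\cdot L^k\subseteq M^{k+1}$.

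To transfer this control from $L^k$ to an actual power of $M$, I would invoke the reduction hypothesis, which gives $r\geq 0$ with $M^{r+1}=LM^r$ and hence by an easy induction $M^{r+s}=L^s M^r$ for every $s\geq 0$. Applying this with $s=k$ chosen as above,
$$
y\cdot M^{r+k}=y\cdot L^k\cdot M^r\subseteq M^{k+1}\cdot M^r=M^{r+k+1},
$$
so $y\in[M^{r+k+1}:_F M^{r+k}]\subseteq\widetilde{M\hspace{0.2em}}$ by Proposition \ref{3.4}. I expect the only real obstacle to be isolating the pigeonhole mechanism cleanly; once the exponents are written down, the remainder is bookkeeping of degrees inside $\calR(M)$ together with the standard iteration of the reduction equation.
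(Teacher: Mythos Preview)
Your proposal is correct and follows essentially the same approach as the paper's proof: both use the pigeonhole bound $k\ge (n-1)\ell+1$ to pass from $y\cdot x_i^{\,n}\in M^{n+1}$ to $y\cdot L^k\subseteq M^{k+1}$, and then the reduction equation $M^{r+k}=L^kM^r$ to conclude $y\cdot M^{r+k}\subseteq M^{r+k+1}$. The paper omits the easy inclusion $\widetilde{M\hspace{0.2em}}\subseteq\bigcup_{n>0}[M^{n+1}:_F L^{(n)}]$ and states the pigeonhole step without spelling out the monomial argument, but otherwise the arguments are identical.
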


\begin{proof}
We set $N=\bigcup_{n>0}\left[M^{n+1}:_F (A{x_1}^n + A{x_2}^n + \cdots + A{x_\ell}^n)\right]$. Let $x \in N$ and choose an integer $n>0$ such that $x\cdot{x_i}^n \in M^{n+1}$ for each $1 \le i \le \ell$. We than have $x L^{m} \subseteq M^{m+1}$
for every $m \ge (n-1) \ell + 1$. Since $L$ is a reduction of $M$, we take an integer $s \ge 0$ such that $M^{s+1} = L \cdot M^s$. Therefore
$$
x M^{m+s} = x (L^mM^s) = (x L^m)M^s \subseteq M^{m+1}M^s = M^{m+s+1}
$$
whence $x \in \widetilde{M \hspace{0.2em}}$ by Proposition \ref{3.4}.
\end{proof}

As a consequence of Proposition \ref{3.12}, we immediately get the following.

\begin{cor}
If $L$ is a reduction of $M$, then
$\widetilde{L\hspace{0.2em}} \subseteq \widetilde{M\hspace{0.2em}}.$
\end{cor}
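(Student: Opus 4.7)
The plan is to reduce the statement to the characterization of the Ratliff--Rush closure given in Proposition \ref{3.4}, namely $\widetilde{L\hspace{0.2em}} = \bigcup_{\ell>0}[L^{\ell+1}:_F L^\ell]$ and $\widetilde{M\hspace{0.2em}} = \bigcup_{n>0}[M^{n+1}:_F M^n]$. So starting from an element $x \in \widetilde{L\hspace{0.2em}}$, I would pick $\ell > 0$ such that $xL^\ell \subseteq L^{\ell+1}$, and then try to promote this condition from $L$ up to $M$ using the reduction hypothesis.

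The key computation uses the definition of a reduction: there exists $s \ge 0$ with $M^{s+1} = LM^s$, which by iteration gives $M^{s+k} = L^k M^s$ for every $k \ge 0$. Taking $m = \ell + s$ and applying $x$ yields
\[
xM^{\ell+s} \;=\; x(L^\ell M^s) \;=\; (xL^\ell)M^s \;\subseteq\; L^{\ell+1}M^s \;\subseteq\; M^{\ell+1}M^s \;=\; M^{\ell+s+1},
\]
so $x \in [M^{m+1}:_F M^m] \subseteq \widetilde{M\hspace{0.2em}}$ by Proposition \ref{3.4}. Since $\widetilde{L\hspace{0.2em}} \subseteq F$ already lies in the ambient free module, there is no containment issue.

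There is no real obstacle here: the argument is essentially the module analogue of the classical ideal-theoretic fact, and it is also implicit in the calculation already used to prove Proposition \ref{3.12}. Indeed, an alternative route would be to fix generators $L = Ax_1 + \cdots + Ax_\ell$, note that $x \in \widetilde{L\hspace{0.2em}}$ forces $x \cdot x_i^n \in L^{n+1} \subseteq M^{n+1}$ for all $i$ for some large $n$, and then quote Proposition \ref{3.12} directly to conclude $x \in \widetilde{M\hspace{0.2em}}$. Either formulation gives a short, self-contained proof.
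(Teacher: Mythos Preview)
Your proposal is correct and matches the paper's intent: the paper simply states that the corollary follows immediately from Proposition \ref{3.12}, which is precisely your second formulation (and your first argument is the same computation unpacked without the generator notation). There is nothing to add.
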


\begin{rem}[{\cite[$(1.1)$]{HJLS}}]
In general, the embedding $L \subseteq M$ does not imply $\widetilde{L\hspace{0.2em}} \subseteq \ \widetilde{M\hspace{0.2em}}$, even if $\rank_AF=1$. For example, let $k[[t]]$ be the formal power series ring over a field $k$. We set $A = k[[t^3, t^4]]$, $I=(t^8)$, and $J=(t^{11}, t^{12})$.
Then $J \subseteq I$, but $\widetilde{J} \nsubseteq \widetilde{I}$.
\end{rem}

The following is the key in our argument. Assertion $(2)$ of Proposition \ref{3.15} shows that the Ratliff--Rush closure in the sense of Definition \ref{3.1} is equivalent to \cite[Definition 2.2]{L}, when $A$ is a Noetherian domain.

\begin{prop}\label{3.15}
Suppose that $M$ is a faithful $A$-module. Then the following assertions hold.
\begin{enumerate}
\item[$(1)$] $\widetilde{M^n \hspace{0.2em}} = (\widetilde{M})^n = M^n$ for every $ n \gg 0$.
\item[$(2)$] Let $N$ be an $A$-submodule of $F$ such that $M \subseteq N$. Then the following conditions are equivalent. 
\begin{enumerate}
\item[$(\rm i)$] $N \subseteq \widetilde{M \hspace{0.2em}}$.
\item[$(\rm ii)$] $M^{\ell} = N^{\ell}$ for some $\ell >0$.
\item[$(\rm iii)$] $M^n = N^n$ for every $n \gg 0$.
\item[$(\rm iv)$] $\widetilde{M\hspace{0.2em}} = \widetilde{N\hspace{0.3em}}$.
\end{enumerate}
\item[$(3)$] $\widetilde{M \hspace{0.2em}}$ is Ratliff--Rush closed, i.e., $\widetilde{\widetilde{M\hspace{0.2em}}} = \widetilde{M \hspace{0.2em}}$.
\end{enumerate}
\end{prop}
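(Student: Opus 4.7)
The plan is to reduce everything to the classical Ratliff--Rush theory for the ideal $MS$ of $S = \Sym_A(F)$ via Proposition~\ref{3.4}, which identifies $\widetilde{M^n\hspace{0.1em}}$ with the degree-$n$ component of $\widetilde{(MS)^n}$. Faithfulness of $M$ ensures, as noted before Corollary~\ref{3.5}, that $MS$ contains a non-zero divisor on $S$, so the classical theorem of Ratliff and Rush applies to $MS$. A second tool to record at the outset is the monotonicity of the chain $\{M^{\ell+1} :_F M^\ell\}_{\ell>0}$: if $xM^\ell \subseteq M^{\ell+1}$ then $xM^{\ell+1} = M \cdot (xM^\ell) \subseteq M^{\ell+2}$, so the chain is increasing in $\ell$, and since $F$ is Noetherian it stabilizes; the same monotonicity holds with $M$ replaced by $\widetilde{M\hspace{0.2em}}$.

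For (1), the classical theorem applied to $MS$ yields $(MS)^n = \widetilde{(MS)^n}$ for all $n \gg 0$; taking degree-$n$ components via Proposition~\ref{3.4} gives $M^n = \widetilde{M^n\hspace{0.1em}}$ for such $n$. For $(\widetilde{M\hspace{0.2em}})^n = M^n$, I would sandwich by $M^n \subseteq (\widetilde{M\hspace{0.2em}})^n \subseteq \widetilde{M^n\hspace{0.1em}}$. The first inclusion is automatic from $M \subseteq \widetilde{M\hspace{0.2em}}$. For the second, pick $y_1, \ldots, y_n \in \widetilde{M\hspace{0.2em}}$; since $\widetilde{M\hspace{0.2em}}$ is a finitely generated $A$-submodule of $F$, a common $\ell$ exists with $y_i M^\ell \subseteq M^{\ell+1}$ for every $i$. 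Partitioning the product $M^{n\ell} = M^\ell \cdot M^\ell \cdots M^\ell$ into $n$ blocks and absorbing each $y_i$ into one block yields $(y_1 \cdots y_n) M^{n\ell} \subseteq M^{n(\ell+1)} = (M^n)^{\ell+1}$, whence $y_1 \cdots y_n \in \widetilde{M^n\hspace{0.1em}}$ by Proposition~\ref{3.4}.

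For (3), applying the monotonicity observation to $\widetilde{M\hspace{0.2em}}$, every $x \in \widetilde{\widetilde{M\hspace{0.2em}}}$ satisfies $x(\widetilde{M\hspace{0.2em}})^m \subseteq (\widetilde{M\hspace{0.2em}})^{m+1}$ for all sufficiently large $m$. Choosing $m$ so large that (1) also yields $(\widetilde{M\hspace{0.2em}})^m = M^m$ and $(\widetilde{M\hspace{0.2em}})^{m+1} = M^{m+1}$ gives $xM^m \subseteq M^{m+1}$, and hence $x \in \widetilde{M\hspace{0.2em}}$; the reverse containment is trivial.

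For (2), (iii)$\Rightarrow$(ii) is immediate and (iv)$\Rightarrow$(i) follows from $N \subseteq \widetilde{N\hspace{0.3em}} = \widetilde{M\hspace{0.2em}}$. For (ii)$\Rightarrow$(i), any $y \in N$ satisfies $yM^{\ell-1} \subseteq yN^{\ell-1} \subseteq N^\ell = M^\ell$, placing $y$ in $\widetilde{M\hspace{0.2em}}$ by Proposition~\ref{3.4}. For (i)$\Rightarrow$(iii), finite generation of $N$ supplies a common $\ell$ with $NM^\ell \subseteq M^{\ell+1}$, and the grouping argument from (1) applied to arbitrary $y_1, \ldots, y_n \in N$ gives $N^n \subseteq \widetilde{M^n\hspace{0.1em}} = M^n$ for $n \gg 0$, while the reverse inclusion $M^n \subseteq N^n$ comes from $M \subseteq N$. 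For (i)$\Rightarrow$(iv), combining (i) with (iii) gives $M^n = N^n$ for $n \gg 0$; picking $\ell$ so large that both chains $\{M^{\ell+1}:_F M^\ell\}$ and $\{N^{\ell+1}:_F N^\ell\}$ have stabilized and the equalities $M^\ell = N^\ell$, $M^{\ell+1} = N^{\ell+1}$ hold, one reads off $\widetilde{M\hspace{0.2em}} = \widetilde{N\hspace{0.3em}}$. I expect the only non-trivial bookkeeping to be the single grouping computation underlying both $(\widetilde{M\hspace{0.2em}})^n \subseteq \widetilde{M^n\hspace{0.1em}}$ and (i)$\Rightarrow$(iii); once this and the reduction via Proposition~\ref{3.4} are available, the rest is formal.
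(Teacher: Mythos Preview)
Your proof is correct and follows essentially the same route as the paper: reduce via Proposition~\ref{3.4} to the ideal $MS$ in $S$, where faithfulness gives positive grade and the classical Ratliff--Rush theorem applies. The paper's proof is a two-line citation of \cite{RR} after this reduction, whereas you unpack the details (the grouping computation for $(\widetilde{M\hspace{0.2em}})^n\subseteq\widetilde{M^n\hspace{0.1em}}$ and the individual implications in (2)); the only structural difference is that the paper obtains (3) as an immediate consequence of (2) (take $N=\widetilde{M\hspace{0.2em}}$ in (iv)), while you derive it from (1).
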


\begin{proof}
The assertion $(3)$ immediately comes from $(2)$. To prove the assertions $(1)$ and $(2)$, by Proposition \ref{3.4}, we are able to reduce to the case of ideals. Then the assertions follow from \cite[Theorem 2.1, Corollary 2.2]{RR}.
\end{proof}


The following is essentially due to Y. Shimoda.

\begin{rem}\label{shimoda}
Let $N$ be an $A$-submodule of $F$ such that $M \subseteq N$. If $M^{\ell} = N^{\ell}$ for some $\ell > 0$, then $M^n = N^n$ for every $n \ge \ell$.
\end{rem}

\begin{proof}
By the assumption, we obtain the case where $n=\ell$. Suppose that $n>\ell$ and the assertion holds for $n-1$. Then, by the induction hypothesis, we get
$
M^{n-1} = N^{n-1} \subseteq MN^{n-2} \subseteq MM^{n-2} = M^{n-1}$, so that $N^{n-1} = M N^{n-2}$. Therefore
$
N^n = N^{n-1} N = (M N^{n-2})N = M N^{n-1} = M M^{n-1} = M^n
$
which completes the proof.
\end{proof}

Let us note the following.

\begin{lem}\label{3.16}
Suppose that $(A, \m)$ is a Noetherian local ring. If $\overline{M} = F$, then $M = F$. 
In particular, if $M$ is a faithful $A$-module and $M \ne F$, then $\widetilde{M \hspace{0.2em}} \ne F$.
\end{lem}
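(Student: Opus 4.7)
My plan is to translate the hypothesis $\overline{M}=F$ into a reduction statement for ideals in the symmetric algebra $S=\Sym_A(F)=A[t_1,\dots,t_r]$, and then pass to the residue field so that a short linear-algebra argument forces the image of $M$ to fill up $F/\fkm F$; Nakayama then closes out the main statement, and the second assertion follows formally from Corollary \ref{3.5}.

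First, by Definition \ref{2.1}, $\overline{M}$ is the degree-one component of the integral closure $\overline{MS}$ of the ideal $MS\subseteq S$. The equality $\overline{M}=F$ therefore says $FS\subseteq \overline{MS}$, i.e., $MS$ is a reduction of the ideal $FS=(t_1,\dots,t_r)S$. Consequently there is an integer $n\ge 0$ with $(MS)(FS)^n=(FS)^{n+1}$. Taking degree-$(n+1)$ homogeneous components in $S$ converts this ideal identity into the equality $M\cdot F^n=F^{n+1}$ inside $\Sym_A(F)$.

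Next, I would reduce modulo $\fkm$. Writing $k=A/\fkm$, the natural isomorphism $S\otimes_A k\cong k[t_1,\dots,t_r]$ sends the image of $M$ to a $k$-subspace $V\subseteq F/\fkm F$ of linear forms, and the displayed equality becomes $V\cdot (t_1,\dots,t_r)^n=(t_1,\dots,t_r)^{n+1}$ in $k[t_1,\dots,t_r]$. The key observation is then that $V$ cannot be a proper subspace: otherwise, after a linear change of variables, the ideal $(V)$ is generated by $t_1,\dots,t_s$ with $s<r$, so the quotient $k[t_1,\dots,t_r]/(V)\cong k[t_{s+1},\dots,t_r]$ is a polynomial ring in at least one variable, in which the class of any linear form $\ell\notin V$ is nonzero---making the implied membership $\ell^{n+1}\in (V)$ impossible. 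Hence $V=F/\fkm F$, that is $M+\fkm F=F$, and Nakayama's lemma yields $M=F$.

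Finally, for the ``in particular'' clause, if $M$ is a faithful $A$-module with $M\ne F$, then by what has just been shown $\overline{M}\subsetneq F$, and Corollary \ref{3.5} supplies $\widetilde{M\hspace{0.2em}}\subseteq \overline{M}\subsetneq F$, so $\widetilde{M\hspace{0.2em}}\ne F$. The main obstacle is the first part: specifically, the careful passage from the module-theoretic hypothesis $\overline{M}=F$ to the ideal-theoretic reduction $FS\subseteq \overline{MS}$ in $S$, and the residue-field argument that prevents the image $V$ from being a proper subspace of linear forms.
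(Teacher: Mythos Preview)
Your proof is correct, and it reaches the conclusion by a genuinely different route than the paper's. The paper argues by contradiction with a minimal-rank counterexample: minimality forces $M\subseteq \fkm F$, and then one uses $\overline{\fkm F}=\fkm F$ (which follows by reducing an integral equation modulo $\fkm$) to get $F=\overline{M}\subseteq \fkm F$, contradicting Nakayama. Your approach instead passes directly from $\overline{M}=F$ to the reduction statement $M F^n=F^{n+1}$ via the identification $\overline{M}=(\overline{MS})_1$, and then reduces modulo $\fkm$ to a linear-algebra obstruction in $k[t_1,\dots,t_r]$.

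What your approach buys is that it avoids the induction on $r$ entirely and does not require isolating the auxiliary fact $\overline{\fkm F}=\fkm F$; the obstruction to $V$ being a proper subspace is immediate once you are in the polynomial ring over $k$. The paper's argument, on the other hand, is shorter to state once one accepts $\overline{\fkm F}=\fkm F$, and it stays at the level of module closures without explicitly invoking the reduction equation $MF^n=F^{n+1}$. Both arguments ultimately rely on the same mechanism---reducing modulo $\fkm$ and using that $k[t_1,\dots,t_r]$ is a domain---but yours makes this step the whole proof rather than hiding it inside the justification of $\overline{\fkm F}=\fkm F$. Your handling of the ``in particular'' clause via Corollary~\ref{3.5} matches the paper exactly.
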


\begin{proof}
Suppose the contrary and choose a counterexample $M$ so that $r=\rank_A F>0$ is as small as possible. Then the minimality shows $M \subseteq \m F$. Therefore $F=\overline{M} \subseteq \overline{\m F} = \overline{\m}F = \m F$, whence we get $F = \m F$, which makes a contradiction. Let us make sure of the last assertion. Suppose that $M \ne F$ and $M$ is faithful. Then $\overline{M} \ne F$ and we get $\widetilde{M \hspace{0.2em}} \ne F$ by Corollary \ref{3.5}. 
\end{proof}

In what follows, we focus on the Buchsbaum--Rim coefficients for the module. Suppose that $A$ is a Noetherian local ring with maximal ideal $\m$, $M \ne (0)$ and $0<\ell_A(F/M) < \infty$.
With this notation, in 1964, Buchsbaum and Rim showed that there exists an integer $\rmbr_i(M) \in \Bbb Z~(0 \le i \le d+r-1)$ such that $\ell_A(F^{n+1}/M^{n+1})$ can be expressed as the polynomial of the form;
$$
\ell_A(F^{n+1}/M^{n+1}) = \sum_{i=0}^{d+r-1}(-1)^i\cdot \rmbr_i(M)\cdot\binom{n+d+r-i-1}{d+r-2}
$$
for every $n \gg 0$ (see \cite{BR}). 
The integer $\rmbr_i(M)$ is called {\it the $i$-th Buchsbaum--Rim coefficient of $M$}. 
We set
$$
\calS =\{ N \subseteq F \mid M \subseteq N \subsetneq F, \ \rmbr_i(M) = \rmbr_i(N) \ \text{for every} \ 0 \le i \le d+r-1 \}.
$$

We then have the following, which shows that $\widetilde{M \hspace{0.2em}}$ is the largest $A$-submodule $N$ of $F$ having the same Buchsbaum--Rim polynomial of $M$.

\begin{prop}
Suppose that $M$ is a faithful $A$-module. Then
$\widetilde{M \hspace{0.2em}} \in \calS$ and $N \subseteq \widetilde{M \hspace{0.2em}}$ for every $N \in \calS$.
\end{prop}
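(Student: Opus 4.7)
The plan is to reduce both assertions to Proposition \ref{3.15} combined with the Buchsbaum--Rim length formula.

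First I would verify that $\widetilde{M\hspace{0.2em}} \in \calS$. The inclusion $M \subseteq \widetilde{M\hspace{0.2em}}$ is immediate from the definition, and the hypothesis $0 < \ell_A(F/M)$ forces $M \ne F$, so Lemma \ref{3.16} gives $\widetilde{M\hspace{0.2em}} \ne F$. Since $\widetilde{\calR(M)}^S$ is a graded $A$-subalgebra of $S$ whose degree one component is $\widetilde{M\hspace{0.2em}}$, we have $(\widetilde{M\hspace{0.2em}})^n \subseteq \widetilde{M^n\hspace{0.2em}}$ for every $n \ge 0$. Combining this with $M^n \subseteq (\widetilde{M\hspace{0.2em}})^n$ and Proposition \ref{3.15}(1), we obtain $M^n = (\widetilde{M\hspace{0.2em}})^n = \widetilde{M^n\hspace{0.2em}}$ for every $n \gg 0$. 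Hence $\ell_A(F^{n+1}/(\widetilde{M\hspace{0.2em}})^{n+1}) = \ell_A(F^{n+1}/M^{n+1})$ for $n \gg 0$, so the two Buchsbaum--Rim polynomials coincide and therefore $\rmbr_i(\widetilde{M\hspace{0.2em}}) = \rmbr_i(M)$ for every $0 \le i \le d+r-1$.

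Next I would prove the maximality. Let $N \in \calS$. Since $M \subseteq N$, the module $N$ is still faithful and $M^n \subseteq N^n$ for every $n \ge 0$, which yields the pointwise inequality $\ell_A(F^{n+1}/N^{n+1}) \le \ell_A(F^{n+1}/M^{n+1})$ for all $n \ge 0$. The assumption that all Buchsbaum--Rim coefficients of $M$ and $N$ agree forces the equality of the two Buchsbaum--Rim polynomials, hence $\ell_A(F^{n+1}/M^{n+1}) = \ell_A(F^{n+1}/N^{n+1})$ for every $n \gg 0$. Combined with the containment $M^{n+1} \subseteq N^{n+1}$, this gives $M^{n+1} = N^{n+1}$ for every $n \gg 0$. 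Then Proposition \ref{3.15}(2), via the implication $(\mathrm{iii}) \Rightarrow (\mathrm{i})$, yields $N \subseteq \widetilde{M\hspace{0.2em}}$, as required.

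There is no real obstacle in the argument; the only subtle point is checking that $\widetilde{M\hspace{0.2em}}$ still sits strictly inside $F$ so that it is an actual member of $\calS$, which is precisely the content of Lemma \ref{3.16}. The rest is a routine length comparison once Proposition \ref{3.15} is in hand.
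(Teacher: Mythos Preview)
Your argument is correct and, for the first assertion ($\widetilde{M\hspace{0.2em}}\in\calS$), it is essentially identical to the paper's own proof: both use Lemma \ref{3.16} to get $\widetilde{M\hspace{0.2em}}\ne F$ and then Proposition \ref{3.15}(1) together with the containment $(\widetilde{M\hspace{0.2em}})^n\subseteq\widetilde{M^n\hspace{0.2em}}$ to conclude that $(\widetilde{M\hspace{0.2em}})^n=M^n$ for $n\gg 0$, whence the Buchsbaum--Rim polynomials coincide.

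For the maximality assertion the paper actually gives no argument at all; its proof ends after showing $\widetilde{M\hspace{0.2em}}\in\calS$. Your treatment of this part---comparing lengths to force $M^{n+1}=N^{n+1}$ for $n\gg 0$ and then invoking the implication $(\mathrm{iii})\Rightarrow(\mathrm{i})$ of Proposition \ref{3.15}(2)---is the natural route and fills in what the paper leaves implicit. The only ingredient worth making explicit is that $\ell_A(F^{n+1}/M^{n+1})<\infty$ (which follows from $\ell_A(F/M)<\infty$), so that the equality of polynomials really yields $M^{n+1}=N^{n+1}$ rather than merely an equality of lengths; but this is routine in the standing hypotheses.
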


\begin{proof}
Note that $\widetilde{M} \ne F$ by Lemma \ref{3.16}.
Choose an integer $\ell > 0$ such that $\widetilde{M^n} = M^n$ for every $n \ge \ell$. Then we have $M^n \subseteq (\widetilde{M})^n \subseteq \widetilde{M^n} = M^n$ and hence $(\widetilde{M})^n = M^n$. Thus $\ell_A(F^{n+1}/M^{n+1}) = \ell_A(F^{n+1}/{(\widetilde{M})}^{n+1})$ for every $n \gg 0$, as desired.
\end{proof}




\section{Proof of Theorem \ref{1.2}}

The purpose of this section is to prove Theorem \ref{1.2}. First of all, we fix our notation and assumptions on which all the results in this section are based.

\begin{setting}\label{4.1}
Let $(A, \m)$ be a two-dimensional regular local ring with infinite residue class field, $M \ne (0)$ a finitely generated torsion-free $A$-module. We set $F= M^{**}$ a finitely generated free $A$-module with $r=\rank_A F>0$, where $(-)^{*}=\Hom_A(-, A)$. 
\end{setting}

\noindent
Passing to the biduality map $M \rightarrow F=M^{**}$, we have $M \subseteq F$ and $\ell_A(F/M) < \infty$ (see e.g., \cite[Proposition 2.1]{K}). Let us consider the projective scheme 
$
\Proj \calR(M) =\{ P \in \Spec \calR(M) \mid P  \ \text{is a graded ideal}, ~P \nsupseteq \fka\}
$
of the Rees algebra $\calR(M)$, where $\fka = \calR(M)_+$. It is known by \cite[Proposition 2.2]{SUV} that $\dim \calR(M) = r+2$. Moreover, in 1995, V. Kodiyalam proved an analogue of the famous result of O. Zariski, i.e., the products of integral closures are the integral closure of the products of modules, namely the equality $\overline{M^n} = (\overline{M})^n$ holds for every $n \ge 0$ (\cite[Theorem 5.2]{K}). Therefore, the integral closure of $\calR(M)$ in its total ring of fractions is given by $\calR(\overline{M})$ which is a module finite extension over $\calR(M)$.

\medskip

We are now in a position to prove Theorem \ref{1.2}.

\begin{proof}[Proof of Theorem \ref{1.2}]
The implications $(2) \Rightarrow (1)$, $(4) \Rightarrow (3)$,  $(6) \Rightarrow (5)$ are obvious.

$(1) \Rightarrow (4)$~~By Proposition \ref{3.15}, $M^n = (\widetilde{M\hspace{0.2em}})^n$ for every $n \gg 0$. Therefore we get 
$M^n = (\widetilde{M\hspace{0.2em}})^n = (\overline{M})^n = \overline{M^n}$, as wanted.

$(3) \Rightarrow (1)$~~Suppose that $M^n = \overline{M^n} = (\overline{M})^n$ for some $n>0$. Then, by Remark \ref{shimoda}, we have $(\overline{M})^{n+1} = M^{n+1}$. Thus
$\overline{M} \subseteq M^{n+1}:_F (\overline{M})^n = M^{n+1}:_FM^n \subseteq \widetilde{M \hspace{0.2em}}\subseteq \overline{M}$ so that $\widetilde{M\hspace{0.2em}} = \overline{M}$.

$(1) \Rightarrow (2)$~~By our assumption, we have $(\widetilde{M\hspace{0.2em}})^n = (\overline{M})^n$ for every $n > 0$. Then 
$\overline{M^n} = (\overline{M})^n = (\widetilde{M\hspace{0.2em}})^n \subseteq \widetilde{M^n\hspace{0.2em}} \subseteq \overline{M^n}$
as desired.

$(4) \Rightarrow (6)$~~Suppose $M^n = \overline{M^n}$ for every $n\gg 0$. Let $C = \calR(\overline{M})/\calR(M)$. We then have $C_n=(0)$ for $n\gg 0$, so that $C$ is finitely graded. Therefore, since $\ell_A(F/M)<\infty$, we obtain
$$
\fka^m \cdot C =(0), \ \ \m^m\cdot C =(0)
$$ for some integer $m > 0$, where $\fka = \calR(M)_+$. Thus $\fkM \subseteq \sqrt{(0):C}$ and hence $\Supp_{\calR(M)} C \subseteq \{\fkM\}$. Consequently, $C_P=(0)$ for every $P \in \Spec \calR(M) \setminus \{\fkM\}$, whence $\calR(M)_P = \calR(\overline{M})_P$ is normal.

$(5) \Rightarrow (4)$~~Let $C = \calR(\overline{M})/\calR(M)$. For each $Q \in \Supp_{\calR(M)}C$, there exists $P \in \Ass_{\calR(M)}C$ satisfying $P \subseteq Q$. Note that $P$ is a graded prime ideal of $\calR(M)$.
We claim that $P \supseteq \fka$. Indeed, if we assume that contrary, that is, $P \not\supseteq \fka$. Then $P \in \Proj (\calR(M))$, so that $\calR(M)_P$ is normal by our hypothesis. Hence $\calR(M)_P = \calR(\overline{M})_P$, which yields $C_P =(0)$. This makes a contradiction, because $P \in \Ass_{\calR(M)}C$. Thus $P \supseteq \fka$ and hence $Q \supseteq \fka$. Therefore $\fka \subseteq \sqrt{(0):C}$, so that $\fka^{\ell} \cdot C =(0) $
for every $\ell \gg 0$. Consequently, $C$ is finitely graded, because $C$ is finitely generated as an $\calR(M)$-module. We finally get $M^n = \overline{M^n}$ for every $n\gg 0$. This completes the proof of the equivalent conditions.

Let us make sure of the last assertions. Since $A$ has an infinite residue class field, we choose a parameter module $L$ in $F$ such that $L$ is a reduction of $\overline{M}$. Thanks to \cite[Proposition 2.2]{KK}, we have $(\overline{M})^2 = L\cdot \overline{M}$, so that $\calR(\overline{M})$ is a Cohen--Macaulay ring. Therefore, $\rmH^1_\fkM(\calR(M)) \cong \calR(\overline{M})/\calR(M)$ and $\rmH^i_\fkM(\calR(M)) = (0)$ for $i \ne 1, r+2$. Hence $\calR(M)$ has finite local cohomology, and $\calR(M)$ is a Cohen--Macaulay ring if and only if $\rmH^1_\fkM(\calR(M))=(0)$. The latter condition is equivalent to saying that $(\overline{M})^n = M^n$ for every $n>0$; in other words, $M$ is integrally closed.
\end{proof}

\begin{rem}
By \cite[Proposition 3.2]{KK} or \cite[Proposition 4.4 (a)]{SUV}, it is proved that $\calR(M)$ is a Cohen--Macaulay ring, provided $M$ is integrally closed. Besides this, if $M$ is integrally closed, then $\calR(M)$ is an almost Gorenstein graded ring (see \cite[Corollary 2.7]{GMTY}).
\end{rem}

As a corollary of Theorem \ref{1.2}, we completely determine the Buchsbaum--Rim coefficients, when the Ratliff--Rush closure coincides the integral closure.

\begin{cor}
Suppose that $M \ne F$ and $\widetilde{M\hspace{0.2em}} = \overline{M}$. Then $\rmbr_1(M) = \rmbr_0(M) - \ell_A(F/\overline{M})$, $\rmbr_i(M) = 0$ for every $2 \le i \le r+1$, and
$$
\ell_A(F^{n+1}/(\overline{M})^{n+1}) = \rmbr_0(M)\cdot\binom{n+r+1}{r+1} - \rmbr_1(M)\cdot\binom{n+r}{r}
$$
for every $n \ge 0$.
\end{cor}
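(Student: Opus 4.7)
The plan is to reduce to $\overline{M}$ and then exploit the Cohen--Macaulay structure of $\calR(\overline{M})$ together with a reduction-number-one computation.

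First, since $\widetilde{M\hspace{0.2em}} = \overline{M}$, condition (4) of Theorem \ref{1.2} supplies some $\ell > 0$ with $M^n = (\overline{M})^n$ for all $n \ge \ell$. Hence the Buchsbaum--Rim functions of $M$ and $\overline{M}$ coincide for $n \gg 0$, and so $\rmbr_i(M) = \rmbr_i(\overline{M})$ for every $0 \le i \le r + 1$. It therefore suffices to verify the stated formulas with $\overline{M}$ in place of $M$.

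Next, because $\overline{M}$ is integrally closed, Theorem \ref{1.2}(c) gives that $\calR(\overline{M})$ is Cohen--Macaulay. Since $A/\fkm$ is infinite I pick a parameter reduction $L \subseteq \overline{M}$ (so $\mu_A(L) = r+1$); as in the proof of Theorem \ref{1.2}, one has $(\overline{M})^2 = L\overline{M}$, and hence $(\overline{M})^{n+1} = L^n \overline{M}$ for every $n \ge 0$. From the inclusions $L^{n+1} \subseteq L^n \overline{M} \subseteq F^{n+1}$ I then get
$$
\ell_A(F^{n+1}/(\overline{M})^{n+1}) = \ell_A(F^{n+1}/L^{n+1}) - \ell_A(L^n \overline{M}/L^{n+1}).
$$
The first term is the Buchsbaum--Rim function of the parameter module $L$; since $\calR(L)$ is Cohen--Macaulay, it equals $\ell_A(F/L)\binom{n+r+1}{r+1}$ for every $n \ge 0$. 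For the second term I examine the quotient $\calR(\overline{M})/\calR(L)$: from the short exact sequence $0 \to \calR(L) \to \calR(\overline{M}) \to \calR(\overline{M})/\calR(L) \to 0$ and the Cohen--Macaulayness of both outer terms, a depth-lemma argument shows that the quotient is a Cohen--Macaulay graded $\calR(L)$-module of dimension $r+1$, generated in degree one by the finite-length module $\overline{M}/L$; a direct Hilbert-function computation then gives $\ell_A(L^n \overline{M}/L^{n+1}) = \ell_A(\overline{M}/L)\binom{n+r}{r}$ for every $n \ge 0$.

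Combining the two expressions and using the additivity $\ell_A(F/L) = \ell_A(F/\overline{M}) + \ell_A(\overline{M}/L)$, comparison with the definition of the Buchsbaum--Rim coefficients immediately yields $\rmbr_0(\overline{M}) = \ell_A(F/L)$, $\rmbr_1(\overline{M}) = \ell_A(\overline{M}/L) = \rmbr_0(\overline{M}) - \ell_A(F/\overline{M})$, $\rmbr_i(\overline{M}) = 0$ for $2 \le i \le r+1$, and the displayed polynomial identity. The main obstacle will be the last step: the two intermediate length functions must equal their polynomial forms for every $n \ge 0$, not merely asymptotically, and ensuring this requires both Cohen--Macaulay Rees algebras together with the reduction-number-one identity $(\overline{M})^2 = L \overline{M}$ to kill off all higher Buchsbaum--Rim coefficients.
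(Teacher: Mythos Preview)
Your argument is correct and follows the same arc as the paper's proof: pass to $\overline{M}$ (so that all $\rmbr_i$ agree), pick a parameter reduction $L$ with $(\overline{M})^2=L\overline{M}$, and read the coefficients off the resulting length formula. The only difference is packaging: the paper simply cites \cite[Corollary~4.2]{KK} for the identity $\ell_A(F^{n+1}/(\overline{M})^{n+1})=\rmbr_0(\overline{M})\binom{n+r+1}{r+1}-\ell_A(\overline{M}/L)\binom{n+r}{r}$ and \cite[Theorem~3.1]{BUV} for $\rmbr_0(\overline{M})=\ell_A(F/L)$, whereas you unfold these into the decomposition $\ell_A(F^{n+1}/L^{n+1})-\ell_A(L^n\overline{M}/L^{n+1})$ and argue each piece directly. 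Your depth-lemma step is sound: $\calR(\overline{M})/\calR(L)$ is Cohen--Macaulay of dimension $r+1$, generated in degree one, and the $r+1$ generators of $L$ form a regular sequence on it, so a Koszul/Hilbert-series count indeed gives $\ell_A(L^n\overline{M}/L^{n+1})=\ell_A(\overline{M}/L)\binom{n+r}{r}$; just note that the companion formula $\ell_A(F^{n+1}/L^{n+1})=\ell_A(F/L)\binom{n+r+1}{r+1}$ really comes from the exactness of the Buchsbaum--Rim complex over the Cohen--Macaulay base (equivalently \cite{BR,BUV}) rather than from Cohen--Macaulayness of $\calR(L)$ alone.
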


\begin{proof}
We have $\rmbr_i(M) =\rmbr_i(\widetilde{M}) = \rmbr_i(\overline{M})$ for every $0 \le i \le r+1$. Since $\overline{M}$ has the reduction number at most one, by \cite[Corollary 4.2]{KK}, we get 
$$
\ell_A(F^{n+1}/(\overline{M})^{n+1}) = \rmbr_0(\overline{M})\cdot\binom{n+r+1}{r+1} - \ell_A(\overline{M}/L)\cdot\binom{n+r}{r}
$$
for every $n \ge 0$, where $L$ is a parameter module in $F$ such that $L$ is a reduction of $\overline{M}$. Moreover, it is known by \cite[Theorem 3.1]{BUV} that $\rmbr_0(\overline{M}) = \rmbr_0(L) = \ell_A(F/L)$. Hence 
$$
\rmbr_1(\overline{M}) = \ell_A(\overline{M}/L) = \ell_A(F/L) - \ell_A(F/\overline{M}) = \rmbr_0(\overline{M}) - \ell_A(F/\overline{M})
$$
which completes the proof.
\end{proof}




\section{Applications}

In this section we explore the application of Theorem \ref{1.2}. Let us maintain the notation as in Setting $\ref{4.1}$. 

\begin{thm}\label{5.1}
The following conditions are equivalent.
\begin{enumerate}
\item[$(1)$] $\calR(M)$ is a Buchsbaum ring and $\widetilde{M\hspace{0.2em}} = \overline{M}$.
\item[$(2)$] $\calR(M)$ is a Buchsbaum ring and $\Proj \calR(M)$ is normal.
\item[$(3)$] $\m \overline{M} \subseteq M$ and $M\cdot \overline{M} = M^2$.
\end{enumerate}
When this is the case, one has
$
\rmH^1_\fkM(\calR(M)) = \left[\rmH^1_\fkM(\calR(M))\right]_1 \cong \overline{M}/M
$
and $\overline{M^n}=M^n$ for every $n \ge 2$.
\end{thm}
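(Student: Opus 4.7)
The equivalence $(1)\Leftrightarrow(2)$ is immediate from the equivalence $(1)\Leftrightarrow(5)$ in Theorem \ref{1.2}, so the core of the proof is to establish $(1)\Leftrightarrow(3)$. The plan is to exploit the hypothesis $\widetilde{M\hspace{0.2em}}=\overline{M}$: by Theorem \ref{1.2}(a), $\rmH^p_\fkM(\calR(M))=0$ for every $p\ne 1,r+2$, so the Buchsbaum property of $\calR(M)$ collapses to the single condition $\fkM\cdot\rmH^1_\fkM(\calR(M))=0$, and by Theorem \ref{1.2}(b) this translates into degree-wise containments between $M^n$ and $\overline{M^n}$ through the identification $[\rmH^1_\fkM(\calR(M))]_n\cong\overline{M^n}/M^n$.

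For $(1)\Rightarrow(3)$, I would simply extract the information from $\fkM\cdot\rmH^1_\fkM(\calR(M))=0$ in the lowest nonzero graded piece: the $\m$-action on $[\rmH^1_\fkM(\calR(M))]_1\cong\overline{M}/M$ forces $\m\overline{M}\subseteq M$, while multiplication by elements of $M=[\calR(M)]_1$, sending $[\rmH^1_\fkM(\calR(M))]_1$ into $[\rmH^1_\fkM(\calR(M))]_2\cong\overline{M^2}/M^2$, forces $M\cdot\overline{M}\subseteq M^2$, with the reverse inclusion automatic.

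For $(3)\Rightarrow(1)$, I would first recover $\widetilde{M\hspace{0.2em}}=\overline{M}$ in order to invoke Theorem \ref{1.2}: given any $x\in\overline{M}$, the identity $M\cdot\overline{M}=M^2$ yields $xM\subseteq M^2$, hence $x\in M^2:_F M\subseteq\widetilde{M\hspace{0.2em}}$ by Proposition \ref{3.4}, while the reverse containment is Corollary \ref{3.5}. The Buchsbaum property then reduces to $\fkM\cdot\rmH^1_\fkM(\calR(M))=0$, which I would prove by a short induction: iterating $M\cdot\overline{M}=M^2$ gives $M\cdot(\overline{M})^n=M^{n+1}$ for all $n\ge 1$, and Kodiyalam's equality $\overline{M^n}=(\overline{M})^n$ recalled in Setting \ref{4.1} then shows that the $\calR(M)_+$-action kills $\rmH^1_\fkM(\calR(M))$; the $\m$-action is handled by $\m(\overline{M})^n=(\m\overline{M})(\overline{M})^{n-1}\subseteq M(\overline{M})^{n-1}\subseteq M^n$.

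Finally, for the concluding assertions, I would show $(\overline{M})^n=M^n$ for every $n\ge 2$, which collapses $\rmH^1_\fkM(\calR(M))$ onto its degree-one component $\overline{M}/M$. Since the residue field is infinite and $M$ is a reduction of $\overline{M}$ by the very definition of integral closure, I would choose a minimal reduction $L\subseteq M$ of $\overline{M}$, necessarily a parameter module in $F$, and apply \cite[Proposition 2.2]{KK} to obtain $(\overline{M})^2=L\cdot\overline{M}\subseteq M\cdot\overline{M}=M^2$; an elementary induction then extends this to every $n\ge 2$. The main obstacle I anticipate is precisely this last step---arranging a minimal reduction of $\overline{M}$ inside $M$---which hinges on $M$ itself being a reduction of $\overline{M}$ together with the standard existence of parameter-module minimal reductions under the infinite residue field hypothesis.
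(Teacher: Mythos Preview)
Your proposal is correct and follows essentially the same route as the paper: the equivalence $(1)\Leftrightarrow(2)$ via Theorem \ref{1.2}, the reduction of Buchsbaumness to $\fkM\cdot\rmH^1_\fkM(\calR(M))=0$ (for which the paper invokes \cite[Corollary 1.1]{SV}), the derivation of $\widetilde{M\hspace{0.2em}}=\overline{M}$ from $M\overline{M}=M^2$ via $\overline{M}\subseteq M^2:_F M\subseteq\widetilde{M\hspace{0.2em}}$, and the induction $\m\,\overline{M^n}\subseteq M^n$, $M\,\overline{M^n}=M^{n+1}$ all match the paper's argument. Your anticipated obstacle is not one---a minimal reduction $L$ of $M$ itself (which exists as a parameter module since the residue field is infinite) is automatically a reduction of $\overline{M}$ with $L\subseteq M$---and your treatment of the concluding assertions via $(\overline{M})^2=L\overline{M}\subseteq M\overline{M}=M^2$ is in fact more explicit than the paper's, which does not spell this step out.
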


\begin{proof}
The equivalence between $(1)$ with $(2)$ follows from Theorem \ref{1.2}.

$(1) \Rightarrow (3)$~~Since $\widetilde{M\hspace{0.2em}} = \overline{M}$, by Theorem \ref{1.2}, we have an isomorphism 
$$
\rmH^1_\fkM(\calR(M)) \cong \calR(\overline{M})/\calR(M)
$$
of graded $\calR(M)$-modules. As $\calR(M)$ is Buchsbaum, $\fkM\cdot\rmH^1_\fkM(\calR(M))=(0)$. Hence $\fkM \cdot \calR(\overline{M}) \subseteq \calR(M)$, so that $\m \overline{M} \subseteq M$ and $M\cdot \overline{M} = M^2$.

$(3) \Rightarrow (1)$~~Since $M\cdot \overline{M} = M^2$, we get 
$\overline{M} \subseteq M^2:_F M \subseteq \widetilde{M \hspace{0.2em}} \subseteq \overline{M}$. Then $\widetilde{M \hspace{0.2em}}= \overline{M}$. By induction on $n \ge 0$, we have that $\m \cdot \overline{M^n} \subseteq M^n$ and $M\cdot \overline{M^n} = M^{n+1}$ for every $n \ge 0$. Hence, 
$\fkM \cdot \rmH^1_\fkM(\calR(M)) = (0)$. Remember that, because $\widetilde{M\hspace{0.2em}} = \overline{M}$, we have $\rmH_\fkM^p(\calR(M)) =(0)$ for every $p\ne 1, r+2$. By \cite[Corollary 1.1]{SV}, we conclude that $\calR(M)$ is a Buchsbaum ring.
\end{proof}

Let us note concrete examples in order to illustrate Theorem \ref{5.1}.

\begin{ex}
Let $A = k[[X, Y]]$ be the formal power series ring over an infinite field $k$. We set $I = (X^4, X^3Y^2, XY^6, Y^8)$ and
$M = I \oplus I \subseteq F = A\oplus A$. Then $\widetilde{M\hspace{0.2em}} = \overline{M}$, so that $\calR(M)$ has finite local cohomology, but not Buchsbaum.
\end{ex}

\begin{ex}\label{5.3}
Let $A = k[[X, Y]]$ be the formal power series ring over an infinite field $k$. We set $I_1 = (X^6, X^5Y^2, X^4Y^3, X^3Y^4, XY^7, Y^8)$, $I_2= (X^5, X^4Y^2, X^3Y^3, XY^6, Y^7)$, and 
$M = I_1 \oplus I_2 \subseteq F = A\oplus A$. 
Then $\widetilde{M\hspace{0.2em}} = \overline{M}$ and $\calR(M)$ is a Buchsbaum ring.
\end{ex}

The following ensures that there exist numerous examples of Buchsbaum Rees algebras.

\begin{cor}
Suppose that $\calR(M)$ is a Buchsbaum ring and $\widetilde{M\hspace{0.2em}} = \overline{M}$. Then, for every integrally closed $\m$-primary ideal $I$, $\calR(IM)$ is a Buchsbaum ring and $\widetilde{IM\hspace{0.2em}} = \overline{IM}$.
In particular, $\calR(\m^{\ell}M)$ is a Buchsbaum ring for every $\ell \ge 0$.
\end{cor}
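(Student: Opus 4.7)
The plan is to deduce the corollary from the implication $(3)\Rightarrow (1)$ of Theorem \ref{5.1} applied to the module $IM$. First I would check that $IM$ fits the framework of Setting \ref{4.1}: since $I$ is $\m$-primary it contains a non-zero-divisor on $A$, so $\rank_A(IM) = \rank_A M = r$, and a short depth argument (using that in a two-dimensional regular local ring every reflexive module is free) identifies $(IM)^{**}$ with $M^{**}=F$. It then suffices to verify the two relations
\[
\m \cdot \overline{IM} \subseteq IM \quad \text{and} \quad IM \cdot \overline{IM} = (IM)^2.
\]

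The crucial step is the identity $\overline{IM} = I\cdot \overline{M}$, valid for any integrally closed $\m$-primary ideal $I$. The inclusion $I\cdot \overline{M}\subseteq \overline{IM}$ is immediate: if $x\in \overline{M}$ satisfies $x^n+c_1x^{n-1}+\cdots+c_n=0$ in $S$ with $c_j\in M^j$, then for every $i\in I$ the element $ix$ satisfies $(ix)^n+(ic_1)(ix)^{n-1}+\cdots+(i^nc_n)=0$ with $i^jc_j\in (IM)^j$. For the reverse containment, I would appeal to Kodiyalam's product theorem (\cite[Theorem 5.2]{K} and the surrounding results in the two-dimensional regular setting), which ensures that the product of two integrally closed submodules of a free module is again integrally closed. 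Applied to the integrally closed ideal $I$ and the integrally closed module $\overline{M}$, it gives that $I\cdot \overline{M}$ is integrally closed, whence $\overline{IM}\subseteq \overline{I\cdot \overline{M}}=I\cdot \overline{M}$.

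Granted this identity, the hypotheses $\m\overline{M}\subseteq M$ and $M\cdot \overline{M} = M^2$ supplied by Theorem \ref{5.1}(3) for $M$ give
\[
\m\cdot \overline{IM} = I\cdot (\m\overline{M})\subseteq I\cdot M = IM, \qquad IM\cdot \overline{IM} = I^2\cdot(M\cdot \overline{M}) = I^2M^2 = (IM)^2,
\]
verifying condition (3) of Theorem \ref{5.1} for $IM$. The implication $(3)\Rightarrow (1)$ then yields $\widetilde{IM\hspace{0.2em}}=\overline{IM}$ together with the Buchsbaum property of $\calR(IM)$. For the final assertion, the case $\ell=0$ is just the standing hypothesis on $M$; for $\ell\ge 1$ the ideal $\m^{\ell}$ is $\m$-primary and integrally closed in $A$ (since $\gr_{\m}(A)$ is a polynomial ring and hence a normal domain, so every power of $\m$ is integrally closed), and applying the main assertion with $I=\m^{\ell}$ completes the argument. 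The principal obstacle is securing the identity $\overline{IM}=I\overline{M}$ via Kodiyalam's theorem; all the remaining steps are formal.
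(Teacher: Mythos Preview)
Your proof is correct and follows essentially the same route as the paper: both arguments verify condition~(3) of Theorem~\ref{5.1} for $IM$ via the computations $\m\cdot\overline{IM}=I(\m\overline{M})\subseteq IM$ and $(IM)\cdot\overline{IM}=I^2(M\overline{M})=(IM)^2$, which rest on the identity $\overline{IM}=I\,\overline{M}$. The only difference is expository: the paper invokes this identity tacitly, while you spell it out (correctly) as a consequence of the Kodiyalam/Zariski product theorem in the two-dimensional regular setting, and you also make explicit why $IM$ still lives in the same free hull $F$ with finite-length cokernel.
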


\begin{proof}
Since $I$ is an $\m$-primary ideal in $A$, we get $\ell_A(F/IM)<\infty$. Then the assertion follows from $\m (\overline{IM}) = I (\m \overline{M}) \subseteq IM$ and $(IM)(\overline{IM}) = I^2 (M \overline{M}) = (IM)^2$.
\end{proof}

For the ideals $I_i~(i=1, 2)$ of $A$ as in Example \ref{5.3}, the Rees algebra $\calR(I_i)$ is Buchsbaum and $\widetilde{I_i} = \overline{I_i}$ (see \cite{GM, M}). Let us consider the relation between the Buchsbaum properties of $\calR(M_1 \oplus M_2)$ and $\calR(M_i)~ (i=1, 2)$. 

\begin{cor}
Let $M_1, M_2 \ne (0)$ be finitely generated torsion-free $A$-modules. We set $F_1 = (M_1)^{**}$, $F_2 = (M_2)^{**}$, and 
$M = M_1 \oplus M_2 \subseteq F = F_1 \oplus F_2$. Then the following conditions are equivalent.
\begin{enumerate}
\item[$(1)$] $\calR(M)$ is a Buchsbaum ring and $\widetilde{M\hspace{0.2em}} = \overline{M}$.
\item[$(2)$] $\calR(M_i)$ is a Buchsbaum ring, $\widetilde{M_i\hspace{0.2em}} = \overline{M_i}~(i=1, 2)$, and 
$M_1\cdot\overline{M_2} = \overline{M_1}\cdot M_2 = M_1\cdot M_2$.
\end{enumerate}
\end{cor}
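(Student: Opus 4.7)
The plan is to reduce condition $(1)$ to the characterization given by Theorem \ref{5.1}, namely that $\fkm\overline{M}\subseteq M$ and $M\cdot\overline{M}=M^2$, and then to decompose each of these equalities using the bigraded structure on $\calR(M)$ arising from the direct sum $M=M_1\oplus M_2$. First I would observe that $F=M^{**}=(M_1)^{**}\oplus (M_2)^{**}=F_1\oplus F_2$, so the ambient symmetric algebra splits as $S=\Sym_A(F)\cong \Sym_A(F_1)\otimes_A\Sym_A(F_2)$ and inherits a $\Z^2$-grading under which elements of $F_1$ have bidegree $(1,0)$ and elements of $F_2$ have bidegree $(0,1)$. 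In this bigrading, $\calR(M)$ is a $\Z^2$-graded subring of $S$ with bihomogeneous decomposition $\calR(M)=\bigoplus_{i,j\ge 0}M_1^iM_2^j$, and in particular the total-degree-$n$ component is $M^n=\bigoplus_{i+j=n}M_1^iM_2^j$.

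The main technical point, which I expect to be the principal obstacle, is establishing the identity
\[
\overline{M}=\overline{M_1}\oplus\overline{M_2}
\]
as submodules of $F_1\oplus F_2$. Since the grading group $\Z^2$ is torsion-free and $\calR(M)\subseteq S$ is an inclusion of $\Z^2$-graded rings, the integral closure $\overline{\calR(M)}^S$ is again $\Z^2$-graded. An element $x\in F_1$ of pure bidegree $(1,0)$ which is integral over $\calR(M)$ satisfies an integral equation whose coefficients may be taken bihomogeneous of bidegrees $(i,0)$, hence lying in $M_1^i$, which forces $x\in\overline{M_1}$; symmetrically for $F_2$. Decomposing a general element $(x_1,x_2)\in\overline{M}\subseteq F$ into its $(1,0)$- and $(0,1)$-components then yields the asserted identity.

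Once this is in hand, the two conditions from Theorem \ref{5.1} expand along bihomogeneous pieces. The condition $\fkm\overline{M}\subseteq M$ splits componentwise into $\fkm\overline{M_i}\subseteq M_i$ for $i=1,2$. The bihomogeneous pieces of $M\cdot\overline{M}$ are $M_1\overline{M_1}$, $M_1\overline{M_2}+\overline{M_1}M_2$, and $M_2\overline{M_2}$, while those of $M^2$ are $M_1^2$, $M_1M_2$, and $M_2^2$. As $M_1M_2\subseteq M_1\overline{M_2}$ and $M_1M_2\subseteq\overline{M_1}M_2$, the middle equality $M_1\overline{M_2}+\overline{M_1}M_2=M_1M_2$ is equivalent to $M_1\overline{M_2}=\overline{M_1}M_2=M_1M_2$. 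Therefore $M\overline{M}=M^2$ holds if and only if $M_i\overline{M_i}=M_i^2$ for both $i=1,2$ together with the cross-condition appearing in $(2)$.

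Finally, I would apply Theorem \ref{5.1} to each $M_i$ separately: the pair of conditions $\fkm\overline{M_i}\subseteq M_i$ and $M_i\overline{M_i}=M_i^2$ is equivalent to $\calR(M_i)$ being Buchsbaum together with $\widetilde{M_i}=\overline{M_i}$. Combining these for $i=1,2$ with the cross-condition $M_1\overline{M_2}=\overline{M_1}M_2=M_1M_2$ gives exactly $(2)$, and reversing the reasoning recovers $(1)$ from $(2)$, completing the equivalence.
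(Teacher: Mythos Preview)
Your proposal is correct and follows essentially the same approach as the paper: both reduce to the characterization of Theorem \ref{5.1}, invoke the identity $\overline{M}=\overline{M_1}\oplus\overline{M_2}$, and then match the bihomogeneous components of $\fkm\overline{M}\subseteq M$ and $M\overline{M}=M^2$ against those of $M$ and $M^2$. The only difference is that you spell out why $\overline{M}=\overline{M_1}\oplus\overline{M_2}$ holds via the $\Bbb Z^2$-grading, whereas the paper simply asserts this equality.
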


\begin{proof}
Since $\overline{M} = \overline{M_1}\oplus\overline{M_1}$, we have the condition $\m \overline{M} \subseteq M$ if and only if $\m \overline{M_i} \subseteq M_i$ for $i=1, 2$. Moreover, by comparing the following equalities
\begin{eqnarray*}
M\overline{M} &=& M_1 \overline{M_1} \oplus (M_1 \overline{M_2} + M_2 \overline{M_1}) \oplus  M_2 \overline{M_2} \\
M^2 &=& {M_1}^2 \oplus M_1M_2 \oplus {M_2}^2
\end{eqnarray*}
we conclude that $M\cdot \overline{M}=M^2$ is equivalent to the condition of $M_i\cdot\overline{M_i}=M^2_i$ for $i=1, 2$ and $M_1\cdot\overline{M_2} = \overline{M_1}\cdot M_2 = M_1\cdot M_2$.
\end{proof}

We now summarize some consequences.

\begin{cor}
Suppose that $\calR(M)$ is a Buchsbaum ring and $\widetilde{M\hspace{0.2em}} = \overline{M}$. Then $\calR(N)$ is a Buchsbaum ring and $\widetilde{N\hspace{0.2em}} = \overline{N}$ 
for all direct summand $N$ of $M$. 
\end{cor}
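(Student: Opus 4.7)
The plan is to reduce immediately to the previous corollary. Write $M = N \oplus N'$ for some $A$-submodule $N'$ of $M$, and set $F_1 = N^{**}$, $F_2 = (N')^{**}$. Since $M$ is a finitely generated torsion-free $A$-module contained in $F = M^{**}$, each direct summand $N$ is also a finitely generated torsion-free $A$-module, and one has the natural identification $F \cong F_1 \oplus F_2$ compatible with the decomposition $M = N \oplus N'$ (note that if $N = (0)$ the assertion is trivial, so we may assume $N, N' \ne (0)$).

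Now apply the previous corollary with $M_1 = N$ and $M_2 = N'$. By hypothesis, $\calR(M)$ is Buchsbaum and $\widetilde{M\hspace{0.2em}} = \overline{M}$, so condition $(1)$ of the previous corollary holds. Therefore condition $(2)$ holds as well, which in particular yields that $\calR(N) = \calR(M_1)$ is a Buchsbaum ring and $\widetilde{N\hspace{0.2em}} = \overline{N}$, completing the proof.

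There is essentially no obstacle here: the content is packaged in the previous corollary. The only minor point to verify is that a direct summand $N$ of a finitely generated torsion-free module $M$ is again finitely generated and torsion-free, and that the embedding $N \hookrightarrow N^{**}$ is compatible with the embedding $M \hookrightarrow M^{**}$ up to the canonical splitting $M^{**} \cong N^{**} \oplus (N')^{**}$, so that the integral closure and Ratliff--Rush closure of $N$ appearing in the statement agree with those computed inside $N^{**}$.
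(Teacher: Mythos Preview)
Your proof is correct and takes essentially the same approach as the paper: the corollary is stated there without proof, immediately following the $M_1 \oplus M_2$ corollary, and is meant to be read as the direct consequence you describe. Your handling of the trivial summand case and the compatibility of the double-dual embeddings is a reasonable bit of care that the paper leaves implicit.
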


\begin{cor}
Suppose that $\calR(M)$ is a Buchsbaum ring and $\widetilde{M\hspace{0.2em}} = \overline{M}$. Then $\calR(M^{\oplus \ell})$ is a Buchsbaum ring and $\widetilde{M^{\oplus \ell}\hspace{0.2em}} = \overline{M^{\oplus \ell}}$ for every $\ell > 0$.
\end{cor}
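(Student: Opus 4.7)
The plan is to proceed by induction on $\ell \ge 1$. The base case $\ell = 1$ is the hypothesis itself. For the inductive step, suppose the assertion holds for some $\ell \ge 1$, and view $M^{\oplus(\ell+1)}$ as the direct sum $M^{\oplus \ell} \oplus M$ embedded in $F^{\oplus \ell} \oplus F$. I would apply the preceding corollary with the choice $M_1 := M^{\oplus \ell}$ and $M_2 := M$. The first two hypotheses of its condition $(2)$ are satisfied: by the inductive hypothesis $\calR(M^{\oplus \ell})$ is Buchsbaum with $\widetilde{M^{\oplus \ell}\hspace{0.2em}} = \overline{M^{\oplus \ell}}$, and by assumption the same holds for $\calR(M)$.

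It then remains to verify the mixed-product condition
$$
M^{\oplus \ell}\cdot \overline{M} \;=\; \overline{M^{\oplus \ell}}\cdot M \;=\; M^{\oplus \ell}\cdot M
$$
inside the Rees algebra of $M^{\oplus(\ell+1)}$. Using the componentwise splitting $M^{\oplus \ell} = M_1\oplus \cdots \oplus M_\ell$ with each $M_i \cong M$, together with the identity $\overline{M^{\oplus \ell}} = \overline{M}^{\oplus \ell}$, each of the three modules above decomposes into $\ell$ summands indexed by the positions in $M^{\oplus \ell}$. In every such summand the three displayed products reduce respectively to $M \cdot \overline{M}$, $\overline{M}\cdot M$, and $M \cdot M$, so the required equalities reduce to the single identity $M \overline{M} = M^2$. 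This identity is precisely condition $(3)$ of Theorem \ref{5.1}, which is supplied by the hypothesis that $\calR(M)$ is Buchsbaum with $\widetilde{M\hspace{0.2em}} = \overline{M}$. Invoking the preceding corollary then gives the conclusion for $\ell+1$, completing the induction.

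The only genuine bookkeeping step is tracking the multi-graded decomposition of the bidegree-$(1,1)$ piece of $\calR(M^{\oplus \ell} \oplus M)$ so as to reduce the mixed-product condition to $M\overline{M}=M^2$; once this reduction is made, every remaining step is a formal appeal to the previous corollary and to Theorem \ref{5.1}.
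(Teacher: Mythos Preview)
Your proof is correct and follows exactly the route the paper intends: the result is stated there without proof, as an immediate consequence of the preceding direct-sum corollary (the equivalence for $M_1\oplus M_2$), and your induction on $\ell$ together with the reduction of the mixed-product condition to $M\overline{M}=M^2$ via Theorem~\ref{5.1} is precisely how that deduction is carried out.
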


\begin{cor}
Let $L$ be an $A$-submodule of $M$ such that $M^2 = LM$. Suppose that $\calR(M), \calR(L)$ are Buchsbaum rings and $\widetilde{L\hspace{0.2em}} = \overline{L}$. Then $\calR(M \oplus L)$ is a Buchsbaum ring and $\widetilde{M \oplus L\hspace{0.2em}} = \overline{M \oplus L}$.
\end{cor}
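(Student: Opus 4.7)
The plan is to reduce this to the previous direct-sum corollary applied with $M_1=M$, $M_2=L$, after first upgrading the hypothesis ``$\widetilde{L\hspace{0.2em}}=\overline{L}$'' to the analogous equality for $M$ and extracting the multiplicative identities supplied by Theorem \ref{5.1}.

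First, I would observe that the hypothesis $M^2=LM$ says exactly that $L$ is a reduction of $M$ in the sense used just before Proposition \ref{3.12}. Hence $\overline{L}=\overline{M}$ (both inclusions are immediate: $L\subseteq M$ gives $\overline{L}\subseteq\overline{M}$, and $M\subseteq\overline{L}$ gives the reverse). By the Corollary that follows Proposition \ref{3.12}, we have $\widetilde{L\hspace{0.2em}}\subseteq \widetilde{M\hspace{0.2em}}$. Combined with the hypothesis $\widetilde{L\hspace{0.2em}}=\overline{L}$, this yields $\overline{M}=\overline{L}=\widetilde{L\hspace{0.2em}}\subseteq\widetilde{M\hspace{0.2em}}\subseteq\overline{M}$, where the last inclusion is Corollary \ref{3.5}. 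Therefore $\widetilde{M\hspace{0.2em}}=\overline{M}$.

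Next, since $\calR(M)$ and $\calR(L)$ are Buchsbaum and their Ratliff--Rush closures equal their integral closures, Theorem \ref{5.1} applied to each of $M$ and $L$ yields
$$\m\overline{M}\subseteq M,\quad M\cdot\overline{M}=M^2,\quad \m\overline{L}\subseteq L,\quad L\cdot\overline{L}=L^2.$$
Also, from $M^2=LM$ and $M\subseteq\overline{L}$, we get $M^2=LM\subseteq L\overline{L}=L^2\subseteq M^2$, so that $L^2=M^2$. I now want to apply the previous direct-sum corollary to $M_1=M$ and $M_2=L$. The conditions on each summand are already in hand, so only the crossed-product identity $M\cdot\overline{L}=\overline{M}\cdot L=M\cdot L$ remains to be checked.

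The verification of this last identity is the main (though brief) step. Using $\overline{L}=\overline{M}$, $M\overline{M}=M^2$, and $M^2=LM$, I compute
$$M\cdot\overline{L}=M\cdot\overline{M}=M^2=LM.$$
Similarly, using $\overline{M}=\overline{L}$, $L\overline{L}=L^2$, and $L^2=M^2=LM$, I get
$$\overline{M}\cdot L=\overline{L}\cdot L=L^2=LM.$$
Hence $M\cdot\overline{L}=\overline{M}\cdot L=M\cdot L$. The previous corollary then immediately implies that $\calR(M\oplus L)$ is a Buchsbaum ring and $\widetilde{M\oplus L\hspace{0.2em}}=\overline{M\oplus L}$, completing the proof. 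The only non-cosmetic obstacle is the bootstrap from $\widetilde{L\hspace{0.2em}}=\overline{L}$ to $\widetilde{M\hspace{0.2em}}=\overline{M}$ via the reduction $L\subseteq M$; the rest is a direct application of Theorem \ref{5.1} and the preceding direct-sum criterion.
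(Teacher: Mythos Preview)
Your proof is correct and follows essentially the same route as the paper: first bootstrap $\widetilde{M\hspace{0.2em}}=\overline{M}$ from $\widetilde{L\hspace{0.2em}}=\overline{L}$ using that $L$ is a reduction of $M$ (so $\overline{L}=\overline{M}$ and $\widetilde{L\hspace{0.2em}}\subseteq\widetilde{M\hspace{0.2em}}$), then extract the identities $M\overline{M}=M^2$ and $L\overline{L}=L^2$ from Theorem~\ref{5.1}, verify the crossed products $M\overline{L}=\overline{M}L=ML$, and invoke the preceding direct-sum corollary. Your argument is in fact slightly more explicit than the paper's, which compresses the verification that $L^2=LM$ into the single chain $L\overline{M}=L\overline{L}=L^2=LM$; your derivation of $L^2=M^2$ via $M\subseteq\overline{L}$ makes this step cleaner.
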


\begin{proof}
Since $L$ is a reduction of $M$, we get $\overline{L} = \widetilde{L\hspace{0.2em}} \subseteq \widetilde{M\hspace{0.2em}} \subseteq \overline{M} =\overline{L}$, so that $\widetilde{M\hspace{0.2em}} = \overline{M}$. Then $ M\overline{M}=M^2$ and hence $M \overline{L} = M \overline{M} = M^2 = LM$, $L\overline{M} = L \overline{L} = L^2 = LM$. Hence $\calR(M \oplus L)$ is a Buchsbaum ring and $\widetilde{M \oplus L\hspace{0.2em}} = \overline{M \oplus L}$.
\end{proof}

\begin{rem}[{\cite[Example 4.3]{L}}]
In general, $\widetilde{I \oplus J} \ne \widetilde{I} \oplus \widetilde{J}$. For example, let $A = k[[X, Y]]$ be the formal power series ring over a field $k$, $I = (X^4, X^3Y, XY^3, Y^4)$, and $J=(X^5, X^2Y^2, Y^5)$. Then $\widetilde{I} = \m^4$, $\widetilde{J} = J$, and $\widetilde{I \oplus J} \subsetneq \widetilde{I} \oplus \widetilde{J}$.
\end{rem}


\medskip

Let us note the example of the Buchsbaum Rees algebra which does not appear as the multi-Rees algebra. To do this, we need some auxiliaries. For each ideal $I$ of $A$, let 
$
\ord_A(I) = \max\{n \in \Bbb Z \mid I \subseteq \m^n\}
$
and call it {\it the order of $I$}. Recall that an ideal $I$ is {\it simple}, if $I \ne JK$ for any proper ideals $J, K$. We denote by $\Fitt_i(N)$ {\it the $i$-th Fitting invariant} of an $A$-module $N$.

\medskip

With this notation, we have the following.

\begin{lem}\label{5.11}
Suppose that $\Fitt_0(F/M)$ is integrally closed and write $\Fitt_0(F/M) = I_1{\cdot} I_2 \cdots I_{\ell}$, where $\ell > 0$ and $I_i$ is a simple $\m$-primary integrally closed ideal of $A$ for every $1 \le i \le \ell$. If $r=\rank_AF = 2$ and
$
\ord_A(\Fitt_1(F/M)) > \min\{\ord_A(I_i)\mid 1 \le i \le \ell\},
$
then $M$ is indecomposable. 
\end{lem}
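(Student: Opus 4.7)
The plan is to argue by contradiction: suppose $M = M_1 \oplus M_2$ splits as a direct sum of nonzero submodules, and derive a contradiction with the hypothesis $\ord_A(\Fitt_1(F/M)) > \min_i \ord_A(I_i)$.

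First I would reduce to the case of ideals. Since $\rank_A F = 2$ and $F = M^{**} = M_1^{**} \oplus M_2^{**}$, each $M_i$ has rank $1$. Over the two-dimensional regular local ring $A$, which is a UFD, every rank-one reflexive module is free, so $M_i^{**} \cong A$ and $F \cong A \oplus A$. Hence the biduality embedding $M \hookrightarrow F$ realizes $M$ as $J_1 \oplus J_2$ for nonzero ideals $J_1, J_2$ of $A$. The finite length of $F/M$, together with the positivity of $\ord_A(\Fitt_1(F/M))$, forces each $J_i$ to be a proper $\m$-primary ideal; otherwise some $J_i = A$ would contribute a unit entry to the block-diagonal presentation matrix of $F/M$, giving $\Fitt_1(F/M) = A$. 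Reading off that block-diagonal presentation yields $\Fitt_0(F/M) = J_1 \cdot J_2$ and $\Fitt_1(F/M) = J_1 + J_2$.

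Next I would invoke Zariski's theory of complete ideals in a two-dimensional regular local ring. Since $J_1 J_2$ is integrally closed by hypothesis, and the product of integrally closed ideals is integrally closed (Zariski's product theorem), the chain $J_1 J_2 \subseteq \overline{J_1} \cdot \overline{J_2} \subseteq \overline{J_1 J_2} = J_1 J_2$ collapses to $\overline{J_1} \cdot \overline{J_2} = I_1 \cdots I_\ell$. Zariski's unique factorization of complete $\m$-primary ideals into simple ones then produces a partition $\{1, \ldots, \ell\} = S_1 \sqcup S_2$ with $\overline{J_k} = \prod_{i \in S_k} I_i$ for $k = 1, 2$. The $\m$-adic order $\ord_A$ is a valuation on $A$, since the associated graded ring with respect to $\m$ is a polynomial ring over a field and hence a domain; therefore $\ord_A$ is additive on products of ideals. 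Combined with $\ord_A(J_k) = \ord_A(\overline{J_k})$ (because $\m^n$ is integrally closed), this yields $\ord_A(J_k) = \sum_{i \in S_k} \ord_A(I_i)$.

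To finish, I would compute $\ord_A(\Fitt_1(F/M)) = \ord_A(J_1 + J_2) = \min(\ord_A(J_1), \ord_A(J_2))$ and aim to bound this above by $\min_i \ord_A(I_i)$, contradicting the hypothesis. The main obstacle lies in exactly this final comparison: for a general partition, the quantity $\min\bigl(\sum_{i \in S_1} \ord_A(I_i),\ \sum_{i \in S_2} \ord_A(I_i)\bigr)$ need not be $\le \min_i \ord_A(I_i)$ (for instance when both $|S_k| \ge 2$). The heart of the argument is therefore to show that decomposability forces an extreme partition with one $S_k$ being the singleton containing the index achieving $\min_i \ord_A(I_i)$; I would attempt this by extracting a finer invariant from $\Fitt_1(F/M)$ beyond its order, or by using cancellation properties in the free monoid of complete ideals of $A$ to rule out the balanced partitions. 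Resolving this combinatorial step delivers the desired contradiction and proves indecomposability of $M$.
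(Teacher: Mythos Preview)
Your overall strategy matches the paper's exactly: argue by contradiction, reduce a nontrivial splitting to $M\cong J_1\oplus J_2$ for proper $\m$-primary ideals $J_1,J_2$ of $A$, read off $\Fitt_0(F/M)=J_1J_2$ and $\Fitt_1(F/M)=J_1+J_2$ from a block-diagonal presentation, and then feed $\overline{J_1}\cdot\overline{J_2}=I_1\cdots I_\ell$ into Zariski's unique factorization. The paper diverges from you only at your ``main obstacle'': there it simply asserts that $\ell=2$, so that (after relabelling) $\overline{J_1}=I_1$ and $\overline{J_2}=I_2$ with $\ord_A(I_1)\le\ord_A(I_2)$, whence $\ord_A(J_1)=\ord_A(I_1)\le\ord_A(\Fitt_1(F/M))-1$, contradicting $J_1\subseteq J_1+J_2=\Fitt_1(F/M)$.

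The obstacle you isolate is genuine and cannot be removed for arbitrary $\ell$: the lemma is false without the extra hypothesis $\ell=2$. Take $A=k[[X,Y]]$ and $M=\m^2\oplus\m^2\subseteq A^{\oplus 2}=F$. Then $\Fitt_0(F/M)=\m^4$ is integrally closed with $\ell=4$ and every $I_i=\m$ of order $1$, while $\Fitt_1(F/M)=\m^2$ has order $2>1$; yet $M$ is visibly decomposable. Thus the claim ``$\ell=2$'' in the paper's proof does not follow from decomposability together with Zariski's theorem; it is effectively an additional hypothesis, and it is exactly what holds in the paper's sole application (Proposition~\ref{5.10}), where $\Fitt_0(F/M)=(X,Y^2)\cdot\overline{(X^5,Y^8)}$. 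Do not try to ``rule out balanced partitions'' in general; instead, add $\ell=2$ to the hypotheses. Under that assumption your argument is already complete: since both $J_k$ are proper, both $S_k$ are nonempty, so the partition of $\{1,2\}$ is forced to be into singletons, giving $\ord_A(J_k)=\ord_A(I_k)$ and $\ord_A(\Fitt_1(F/M))=\min(\ord_A(I_1),\ord_A(I_2))$, the desired contradiction.
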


\begin{proof}
Suppose the contrary. We write $M \cong X \oplus Y$ for some $A$-modules $X \ne (0)$ and $Y \ne (0)$. Since $M$ is a torsion-free $A$-module with $\rank_AM=2$, we get $X, Y$ are torsion-free $A$-modules of rank one. Therefore, since $A$ is a two-dimensional regular local ring, we may choose $\m$-primary ideals $I$ and $J$ such that $X=I$, $Y=J$. Hence $C \cong A/I \oplus A/J$ as an $A$-module. To see the minimal free resolutions of $C$, $A/I$, and $A/J$, we choose the invertible matrices $P$, $Q$ satisfying
$$
P \cdot {\Bbb M}\cdot Q = 
\begin{pmatrix}
f_1 & f_2 & \cdots & f_\ell & 0 & 0 & \cdots & 0 \\
0 & 0 & \cdots & 0 & g_1 & g_2 & \cdots & g_m
\end{pmatrix}
$$
where $I = (f_1, f_2, \ldots, f_\ell)$, $J=(g_1, g_2, \cdots, g_m)$, $\ell=\mu_A(I) > 1$, $m = \mu_A(J) >1$, and 
$$
A^{\oplus(\ell + m)} \overset{\Bbb M}{\longrightarrow} A^{\oplus 2} \longrightarrow F/M \longrightarrow 0
$$ 
denotes the presentation of $F/M$ as an $A$-module. Then, since $P$, $Q$ are invertible, we get
$$
\Fitt_1(F/M) = \rmI_1(\Bbb M) = \rmI_1(P \cdot {\Bbb M}\cdot Q) = I + J
$$
and 
$$
\Fitt_0(F/M) = \rmI_2(\Bbb M) = \rmI_2(P \cdot {\Bbb M}\cdot Q) = IJ
$$
where $\rmI_i(\Bbb L)$ stands for the ideal of $A$ generated by $i \times i$ minors of a matrix $\Bbb L$. Note that, since $\Fitt_0(F/M)$ is an integrally closed ideal of $A$, we obtain $\overline{I}\cdot \overline{J} = I_1 {\cdot}I_2 \cdots I_{\ell}$. Thanks to Zariski's theorem (see \cite[Theorem 14.4.8]{SH}), we have $\ell = 2$, and may assume that  
$\overline{I} = I_1$, $\ord_A(I_1) \le \ord_A(I_2)$. 
Hence, $\ord_A(I) \le \ord_A(\Fitt_1(F/M))-1$, which makes a contradiction, because $I \subseteq I+J = \Fitt_1(F/M)$.
\end{proof}

We are now ready to state the example.

\begin{prop}\label{5.10}
Let $A=k[[X, Y]]$ be the formal power series ring over an infinite field $k$. We set 
$${\Bbb M} = 
\begin{pmatrix}
X^3 & X^2Y^2 & XY^3 & Y^5 & 0 & 0 & 0 \\
0 & 0 & X^3 & 0 & X^2Y^2 & XY^4 & Y^5
\end{pmatrix}
$$
and consider the presentation 
$$
A^{\oplus 7} \overset{\Bbb M}{\longrightarrow} A^{\oplus 2} \longrightarrow C \longrightarrow 0
$$
of $A$-modules.
We put $M = \Im {\Bbb M} \subseteq F = A^{\oplus 2}$. Then $\calR(M)$ is a Buchsbaum ring, $\widetilde{M\hspace{0.2em}} = \overline{M}$, $M \ne \overline{M}$, and $\calR(M)$ is not a multi-Rees algebra.
\end{prop}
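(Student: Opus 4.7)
The plan is to verify condition~(3) of Theorem~\ref{5.1} by direct computation, then invoke Lemma~\ref{5.11} to establish indecomposability.

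First I would pin down the integral closure $\overline{M}$. Since $A$ is a two-dimensional regular local ring with $\ell_A(F/M)<\infty$, each projection $\pi_i\colon F\to A$ maps $M$ into an $\m$-primary ideal, and $\overline{M}$ sits between $M$ and $\pi_1(M)\oplus\pi_2(M)$. A Newton-polyhedron check shows that both $\pi_1(M)=(X^3,X^2Y^2,XY^3,Y^5)$ and $\pi_2(M)=(X^3,X^2Y^2,XY^4,Y^5)$ are integrally closed ideals of $A$. Using the coupling generator $v_3=(XY^3,X^3)$ together with explicit quadratic integral equations over $\calR(M)\subseteq A[t_1,t_2]$, I would identify $\overline{M}$ precisely; in particular this produces a concrete element of $\overline{M}\setminus M$, witnessing $M\ne\overline{M}$.

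Given an explicit description of $\overline{M}$, I would verify the two identities $\m\cdot\overline{M}\subseteq M$ and $M\cdot\overline{M}=M^2$ directly, testing each new generator of $\overline{M}$ against $X$, $Y$, and each column vector $v_i$ of $\Bbb M$. Theorem~\ref{5.1} then yields at once that $\calR(M)$ is a Buchsbaum ring and that $\widetilde{M\hspace{0.2em}}=\overline{M}$.

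For the non-multi-Rees conclusion, I would apply Lemma~\ref{5.11}. Computing the Fitting ideals from $\Bbb M$ gives
\[
\Fitt_1(F/M)=\rmI_1(\Bbb M)=(X^3,X^2Y^2,XY^3,Y^5),\qquad\ord_A(\Fitt_1(F/M))=3,
\]
and $\Fitt_0(F/M)=\rmI_2(\Bbb M)=(X^6,X^5Y^2,X^4Y^4,X^3Y^5,X^2Y^7,XY^8,Y^{10})$, a monomial ideal whose Newton polygon has only the two edges of primitive directions $(-5,8)$ and $(-1,2)$; hence $\Fitt_0(F/M)$ is integrally closed. Computing the Minkowski sum of Newton polyhedra then yields the Zariski factorization
\[
\Fitt_0(F/M)=\overline{(X^5,Y^8)}\cdot(X,Y^2)
\]
into simple complete $\m$-primary ideals of orders $5$ and $1$, so $\min\ord_A(I_i)=1<3$. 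Lemma~\ref{5.11} now gives that $M$ is indecomposable, and since $M$ has rank $2$, it is not isomorphic to a direct sum of ideals; therefore $\calR(M)$ cannot arise as a multi-Rees algebra.

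The main obstacle is the first step: nailing down $\overline{M}$ precisely and carrying out the module-theoretic verifications of the two identities in Theorem~\ref{5.1}(3); the subsequent Zariski factorization reduces to a convex-geometric Minkowski-sum computation and is comparatively routine.
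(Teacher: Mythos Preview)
Your proposal follows essentially the same route as the paper: compute $\overline{M}$ explicitly, verify the two conditions of Theorem~\ref{5.1}(3), and then apply Lemma~\ref{5.11} with the factorization $\Fitt_0(F/M)=(X,Y^2)\cdot\overline{(X^5,Y^8)}$ and $\ord_A(\Fitt_1(F/M))=3>1$. For the step you flag as the main obstacle, the paper pins down $\overline{M}=M+A\binom{XY^4}{0}$ by an explicit quadratic integral equation for $XY^4t_1$ together with a length count sandwiching $M\subsetneq L\subseteq\overline{M}\subsetneq p_1(M)\oplus p_2(M)$ against an inner direct sum $J_1\oplus J_2\subseteq M$; it also deduces $M\ne\overline{M}$ from $\mu_A(M)=7$ via Kodiyalam's inequality rather than by exhibiting an element.
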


\begin{proof}
One can check that $\mu_A(M) = 7$. Since $\mu_A(M) =7$, we get $M \ne \overline{M}$ by \cite[Proposition 2.2, Proposition 2.5]{K}.
Note that $\Fitt_0(F/M) = \rmI_2(\Bbb M) = (X, Y^2)\overline{(X^5, Y^8)}$ is an integrally closed ideal of $A$ and $\Fitt_1(F/M) = \rmI_1(\Bbb M) = (X^3, X^2Y^2, XY^3, Y^5)$, so that $M$ is indecomposable by Lemma \ref{5.11}.
The integral closure of $M$ is given by
$$
\overline{M} = M + \left<
\begin{pmatrix}
XY^4 \\
0
\end{pmatrix}
\right>.
$$
Indeed, we set $L=M + \left<\left(
\begin{smallmatrix}
XY^4 \\
0
\end{smallmatrix}\right)
\right>$ and write $F=At_1 + At_2$, where $t_1, t_2$ forms a free basis of $F$. Then the integral equation
$$
(XY^4t_1)^2 - Y(XY^3t_1 + X^3t_2)(XY^4t_1) + (X^3t_1)(Y^5t_2) = 0
$$
shows $XY^4t_1 \in \overline{M}$, so that $\overline{M} \supseteq L$.
Let $p_i : F \to A$ denote the $i$-th projection of $F$. We then have
$$
p_1(M) = (X, Y^2)\cdot\overline{(X^2, Y^3)} \quad \ \text{and} \quad \ p_2(M) = \overline{(X^3, Y^5)}
$$
which are integrally closed. Moreover, we have $X^4t_2 \in M$. Therefore, we get the chain of $A$-submodules of $F$;

\begin{equation*}
\begin{matrix}
J_1 = (X^3, X^2Y^2, Y^5) \\
\oplus \\
J_2=(X^4, X^2Y^2, XY^4, Y^5)
\end{matrix}
\subseteq 
M \subsetneq L \subseteq \overline{M} \subsetneq
\begin{matrix}
p_1(M) \\
\oplus \\
p_2(M)
\end{matrix}.
\end{equation*}
Note that 
$$
\ell_A(J_1 \oplus J_2 / p_1(M) \oplus p_2(M)) = 4 \ \ \text{and} \ \  \ell_A(M/ J_1 \oplus J_2) \ge 2
$$
whence we have $\overline{M} = L$, as desired.
It is straightforward to show that $\m \overline{M} \subseteq M$ and $M \overline{M} =M^2$. 
Finally, $\calR(M)$ is a Buchsbaum ring and $\widetilde{M \hspace{0.2em}} = \overline{M}$. 
\end{proof}


\medskip

Closing this paper, let us now discuss the Buchsbaum property for the fiber cone of modules.
We now define
$$
\calF(M) = A/\m \otimes_A \calR(M) \cong \calR(M)/{\m \calR(M)}
$$
and call it {\it the fiber cone of $M$}.
In 2001, J. Brennan, B. Ulrich, and W. V. Vasconcelos showed that $\dim \calF(M) = r + 1$ (see \cite[Proposition 2.2]{BUV}).
Recently, it is proved by R. Balakrishnan and A. V. Jayanthanan  an analogue of the result about the Cohen--Macaulay property for the fiber cone. More precisely, the fiber cone $\calF(M)$ is Cohen--Macaulay, if $M$ has the reduction number at most one (\cite[Theorem 1.2]{BJ}). 
By using their results, we finally reach the following.

\begin{thm}
Suppose that $\calR(M)$ is a Buchsbaum ring and $\widetilde{M\hspace{0.2em}} = \overline{M}$. Then $\calF(M)$ is a Buchsbaum ring.
\end{thm}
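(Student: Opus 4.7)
The plan is to compare $\calF(M)$ with $\calF(\overline{M})$, which will turn out to be Cohen--Macaulay under our hypotheses, and to control the difference by $k$-vector spaces concentrated in degree one.

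By Theorem~\ref{5.1} the assumption yields $\m\overline{M}\subseteq M$, $M\overline{M}=M^{2}$, and $\overline{M^{n}}=M^{n}$ for every $n\ge 2$. Moreover, as in the proof of Theorem~\ref{1.2}, I fix a parameter module $L\subseteq \overline{M}$ which is a reduction of $\overline{M}$ with $(\overline{M})^{2}=L\overline{M}$; in particular $\overline{M}$ has reduction number at most one. Hence \cite[Theorem~1.2]{BJ} forces $\calF(\overline{M})$ to be a Cohen--Macaulay graded ring of dimension $r+1=\dim \calF(M)$.

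Next I analyze the graded ring homomorphism $\varphi\colon \calF(M)\to \calF(\overline{M})$ induced by the inclusion $\calR(M)\subseteq \calR(\overline{M})$. In degree $0$ it is the identity on $A/\m$; in every degree $n\ge 2$ it is an isomorphism, because $M^{n}=\overline{M^{n}}$ forces $\m M^{n}=\m \overline{M^{n}}$; and in degree one the containment $\m\overline{M}\subseteq M$ gives
$$
K:=\ker(\varphi_{1})=\m\overline{M}/\m M,\qquad C:=\Coker(\varphi_{1})=\overline{M}/M.
$$
Writing $\calI=\Im \varphi$, one obtains two short exact sequences of graded $\calF(M)$-modules
$$
0\to K\to \calF(M)\to \calI\to 0,\qquad 0\to \calI\to \calF(\overline{M})\to C\to 0,
$$
in which both $K$ and $C$ are finite-length modules concentrated in degree one.

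Let $\fkN=\calF(M)_{+}$ denote the graded maximal ideal of $\calF(M)$. Since $\rmH^{i}_{\fkN}(\calF(\overline{M}))=0$ for all $i\le r$ (Cohen--Macaulayness) and $\rmH^{i}_{\fkN}(-)$ vanishes in positive cohomological degree on finite-length modules, the long exact sequences of local cohomology attached to the above two short exact sequences collapse to give
$$
\rmH^{0}_{\fkN}(\calF(M))\cong K,\qquad \rmH^{1}_{\fkN}(\calF(M))\cong C,\qquad \rmH^{i}_{\fkN}(\calF(M))=0\ (2\le i\le r).
$$
Since $K$ and $C$ live only in degree one, the action of $\fkN$ shifts them into degree $\ge 2$ where they vanish; explicitly, $M\cdot(\m\overline{M})=\m(M\overline{M})=\m M^{2}$ and $M\cdot \overline{M}=M^{2}$ yield $\fkN K=0$ and $\fkN C=0$ directly. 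Therefore $\fkN \cdot \rmH^{i}_{\fkN}(\calF(M))=0$ for every $i<\dim \calF(M)=r+1$, and \cite[Corollary~1.1]{SV} concludes that $\calF(M)$ is Buchsbaum. The main obstacle is the explicit degree-one determination of $K$ and $C$ and the verification that $\fkN$ annihilates them; both rest decisively on the equalities $\m\overline{M}\subseteq M$ and $M\overline{M}=M^{2}$ extracted from Theorem~\ref{5.1}, together with $\overline{M^{n}}=M^{n}$ for $n\ge 2$ to dispose of all other degrees.
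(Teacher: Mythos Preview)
Your argument mirrors the paper's proof almost exactly: both compare $\calF(M)$ with the Cohen--Macaulay ring $\calF(\overline{M})$, identify the kernel and cokernel of the induced map as $K=\m\overline{M}/\m M$ and $C=\overline{M}/M$ concentrated in degree one, and read off $\rmH^0_\fkN(\calF(M))\cong K$, $\rmH^1_\fkN(\calF(M))\cong C$, $\rmH^i_\fkN(\calF(M))=0$ for $2\le i\le r$. The one point to watch is the final citation: the paper invokes \cite[Proposition~3.1]{G} here rather than \cite[Corollary~1.1]{SV} (which it reserved for Theorem~\ref{5.1}, where only $\rmH^1$ was nonzero); since the bare condition $\fkN\cdot\rmH^i_\fkN(\calF(M))=0$ for all $i<\dim\calF(M)$ is only quasi-Buchsbaumness in general, you should confirm that the result you cite actually delivers Buchsbaumness when both $\rmH^0$ and $\rmH^1$ survive---Goto's criterion does, precisely by exploiting the concentration of $K$ and $C$ in a single graded degree.
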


\begin{proof}
Thanks to Theorem \ref{5.1}, we get $C = \calR(\overline{M})/\calR(M) = \overline{M}/M$. Look at the exact sequence $0 \to \calR(M) \to \calR(\overline{M}) \to C \to 0$ of graded $\calR(M)$-modules. Applying the functor $A/\m \otimes_A -$, we have the sequenece $0 \to K \to \calF(M) \to \calF(\overline{M}) \to C \to 0$
where $K$ denotes the kernel of the induced homomorphism $\calF(M) \to \calF(\overline{M})$. Note that $K=K_1 = \m \overline{M}/\m M$. By \cite[Theorem 1.2]{BJ}, $\calF(\overline{M})$ is a Cohen--Macaulay ring. Hence, 
$\rmH^0_\fkM(\calF(M)) \cong K$, $\rmH^1_\fkM(\calF(M)) \cong C$, and $\rmH^i_\fkM(\calF(M)) = (0)$ for every $2 \le i \le r$.
Therefore, by \cite[Proposition 3.1]{G}, we conclude that $\calF(M)$ is a Buchsbaum ring.
\end{proof}


\vspace{0.5em}

\begin{ac}
The author would like to thank Shiro Goto for valuable advices and comments. 
The author is also grateful to the referee for his/her careful reading of the manuscript.
The author was partially supported by JSPS Grant-in-Aid for Young Scientists (B) 17K14176 and Waseda University Grant for Special Research Projects 2018K-444, 2018S-202.
\end{ac}


\end{document}